\documentclass[12 pt]{amsart}

\usepackage{xypic}
\usepackage{amssymb}
\usepackage{dsfont}
\usepackage{hyperref}

\usepackage{pythonhighlight}

\usepackage{comment}
\usepackage{amsmath}
\usepackage{mathtools}
\usepackage{enumitem}

\usepackage{tikz-cd}

\xyoption{all}

\numberwithin{equation}{section}
\numberwithin{figure}{section}
\theoremstyle{plain}
\newtheorem{thm}{Theorem}[section]
\newtheorem*{thm*}{Theorem}

\newtheorem{prop}[thm]{Proposition}
\newtheorem{cor}[thm]{Corollary}
\newtheorem{fact}[thm]{Fact}
\theoremstyle{definition}

\newtheorem{defn}[thm]{Definition}

\newtheorem{lem}[thm]{Lemma}

\newtheorem{ques}[thm]{Question}

\theoremstyle{remark}

\newtheorem*{acknowledgement*}{Acknowledgement}

\newcommand\suchthat{\;\ifnum\currentgrouptype=16 \middle\fi|\;}

\def\F{\mathcal{F}}
\def\d{\delta}

\newcommand{\Z}{\mathbb{Z}}

\def\cyc{\operatorname{cyc}}
\newcommand{\eps}{\varepsilon}
\newcommand{\llangle}{\langle\! \langle}
\newcommand{\rrangle}{\rangle\! \rangle}

\title{Random semigroups \'a la Gromov}
\author{Guy Blachar}
\address{Universit\'{e} Paris-Dauphine -- CEREMADE, Place de Lattre de Tassigny, Paris, France}
\email{guy.blachar@gmail.com}
\author{Be'eri Greenfeld}
\address{Department of Mathematics and Statistics, CUNY Hunter College, 695 Park Avenue, New York NY 10065, USA} \email{beeri.greenfeld@hunter.cuny.edu}

\author{Margaret Shideler}
\address{Department of Mathematics and Statistics, CUNY Hunter College, 695 Park Avenue, New York NY 10065, USA} \email{margaret.shideler@hunter.cuny.edu}

\subjclass[2020]{Primary 20M05, 60B15; Secondary 20P05}
\keywords{Random semigroups, random groups, Gromov's density model}

\begin{document}

\begin{abstract}
We introduce a model of random semigroups analogous to Gromov's density model for random groups. For a fixed number $n\ge 2$ of generators and a density parameter $\delta\in (0,1)$, we impose a uniformly chosen set of $\lfloor n^{2\delta l}\rfloor$ relations of the form $u=v$, where $u,v$ are distinct words of length at most $l$, and study the resulting semigroup as $l\to\infty$. We prove that, with overwhelming probability, above density $1/2$ the semigroup is trivial, while below density $1/2$ it contains a free subsemigroup of rank two, has a Golod-Shafarevich semigroup algebra over every field, and has a homomorphic image which generates a non-elementary hyperbolic group. We also show that the small cancellation condition $C(m)$ holds with overwhelming probability whenever $\delta<1/2m$. Finally, for $n\ge 3$, at critical density $1/2$ we obtain a subsequential limiting distribution on finite semigroups, with positive mass at the trivial semigroup and unbounded support in size. This contrasts with random groups, which have order at most two with overwhelming probability at critical density.
\end{abstract}

\maketitle

\section{Introduction}

The theory of random groups, originating in the work of Gromov \cite{G93}, provides a framework for studying the properties of a ``typical'' finitely presented group. In Gromov's density model, one chooses a random set of relators of a prescribed length at a fixed density and studies the resulting group as this length tends to infinity. In this paper, we introduce an analogous density model for random semigroups and study which properties of random groups persist in this broader setting.

Let us define the model of our random semigroups. Fix $n\ge 2$, and consider the free semigroup $\F_n=\langle x_1,\dots,x_n\rangle$. For each $l\ge 1$, we denote by $B_{n,l}$ the set of words in $\F_n$ of length at most $l$. This set can be thought of as the ball of radius $l$ in the (undirected) Cayley graph of~$\F_n$, and has size $|B_{n,l}|=n+\cdots+n^l=n^l(\frac{n}{n-1}+o_l(1))$.

\begin{defn}
Fix positive integers $n\ge 2$ and $l\ge 1$. Given a density parameter $\d\in (0,1)$, we pick a uniformly random set of $\lfloor n^{2\delta l}\rfloor$ unordered pairs $\{u_i,v_i\}\subseteq B_{n,l},\ i\in I:=\{1,\dots,\lfloor n^{2\delta l} \rfloor\}$ and consider the quotient semigroup $M_{n,\d,l} \coloneqq \F_n / \langle u_i=v_i\ |\ i\in I\rangle$.
\end{defn}

We think of $M_{n,\d,l}$ as a generalized random variable $\Omega \rightarrow \{\text{Semigroups}\}$ where the probability space $\Omega$ is the collection of all sets of $\lfloor n^{2\delta l} \rfloor$ unordered pairs of elements from $B_{n,l}$.

We are interested in properties of the random semigroup which hold when the number of generators and density are fixed, while the length~$l$ tends to infinity.

\begin{defn}
Let $\mathcal{P}$ be a semigroup property. We say that a random semigroup with $n$ generators and density $\delta$ satisfies $\mathcal{P}$ \textbf{with overwhelming probability} if
\[
\lim_{l\rightarrow \infty} \Pr(M_{n,\d,l}\ \text{satisfies}\ \mathcal{P})=1.
\]
\end{defn}

In Gromov's density model, density $\d=\frac{1}{2}$ marks a phase transition: with overwhelming probability, the random group is either trivial or isomorphic to $\Z/2\Z$ when $\d>\frac{1}{2}$, and non-elementary hyperbolic when $\d<\frac{1}{2}$ \cite{G93,Ollivier}. Further model-theoretic evidence for the largeness of random groups when $\d<\frac{1}{2}$ is given in \cite{KMS}. If one samples freely reduced relators, the group is also trivial or isomorphic to $\Z/2\Z$ with overwhelming probability at the critical density $\d=\frac{1}{2}$ \cite{DJKLMS16}.

In this work we study the same phase transition for random semigroups. We show in Theorem~\ref{thm:big-density} that for density $\d>\frac{1}{2}$, the random semigroup $M_{n,\d,l}$ is trivial with overwhelming probability. Here we do not need to add an exceptional case as $\Z/2\Z$, which is the case since our relators do not have fixed length.

When $\d<\frac{1}{2}$, we show that $M_{n,\d,l}$ is typically large by showing that the following properties hold with overwhelming probability:
\begin{itemize}
    \item $M_{n,\d,l}$ contains nonabelian free semigroups (Proposition~\ref{prop:free-subsemigroups}), hence has exponential growth.
    \item For any field $\mathbb{K}$, the semigroup algebra $\mathbb{K}[M_{n,\d,l}]$ is a filtered Golod-Shafarevich algebra (Proposition~\ref{prop:GS}). It then follows that $M_{n,\d,l}$ has a periodic quotient of bounded index with exponential growth (Corollary~\ref{cor:periodic-quotient}).
    \item We study small cancellation properties of $M_{n,\d,l}$, which are defined in an analogous way to the standard small cancellation properties of groups. We show that if $\d<\frac{1}{2m}$, then $M_{n,\d,l}$ satisfies $C(m)$ with overwhelming probability (Proposition~\ref{prop delta C(m)}). This allows us to prove that when $\d<\frac{1}{6}$, the semigroup $M_{n,\d,l}$ is residually finite (Corollary~\ref{cor:M-res-fin}).
    \item For any given $N\ge 1$, the semigroup $M_{n,\d,l}$ has a finite homomorphic image of size $N$ (Proposition~\ref{prop:finite-hom-images}).
    \item $M_{n,\d,l}$ is neither left amenable nor right amenable (Corollary~\ref{cor:non-amenability}). In fact, we show that there is a (random) group $G_{n,\d,l}$ and a homomorphism $\pi\colon M_{n,\d,l}\to G_{n,\d,l}$ such that $\pi(M_{n,\d,l})$ generates $G_{n,\d,l}$ (as a group), and $G$ is non-elementary hyperbolic (Theorem~\ref{thm:group-hyperbolic}).
\end{itemize}

Finally, we study the critical case $\d=\frac{1}{2}$. We show that for $n\ge 3$, the semigroup $M_{n,1/2,l}$ is finite with overwhelming probability (Theorem~\ref{thm:critical-tight}). However, in contrast to the case of random groups, here we get nontrivial limiting behaviour: $M_{n,1/2,l}$ is trivial with positive limiting probability (Corollary~\ref{cor:critical-trivial}), but for every $N\ge 1$ there is a positive limiting probability that $M_{n,1/2,l}$ has size at least $N$ (Proposition~\ref{prop:critical-unbounded}).

\subsection*{Structure of the paper}

We begin with some preliminary results of the random semigroup and its random presentation in Section~\ref{sec:prelim}, covering the large density case. The small density case is divided into several parts. We cover largeness properties in Section~\ref{sec:small-density}, small cancellation in Section~\ref{sec:small-cancellation}, and hyperbolicity and non-amenability in Section~\ref{sec:amenability}. The final section (Section~\ref{sec:critical}) is devoted to study the critical case.

\subsection*{Notations}
We write $\F_n=\langle x_1,\dots,x_n\rangle$ for the $n$-generated free semigroup. We refer to elements of $\F_n$ as words. For a word $w\in\F_n$, we write $|w|$ for its length with respect to the standard generators of~$\F_n$, which is the homomorphism $|\cdot| \colon \F_n \rightarrow \mathbb{N}\cup \{0\}$ given by $|x_i|=1$ for all $1\leq i\leq n$.

\subsection*{Acknowledgements}

BG is supported by the Mary P. Dolciani Halloran Foundation, the Simons Foundation, and the PSC-CUNY Research Award Program.
GB is supported by the ERC consolidator grant CUTOFF (101123174). Views and opinions expressed are however those of the authors only and do not necessarily reflect those of the European Union or the European Research Council Executive Agency.

\section{Preliminary results}\label{sec:prelim}

For a set $X$, we denote by ${X \choose 2}$ the set of (unordered) pairs of elements from $X$. Given a set of relations $\mathcal{R} \subseteq {\F_n \choose 2}$, we obtain a semigroup presentation $\langle x_1,\dots,x_n\ |\ \mathcal{R} \rangle$. Given a relation $u=v$, we call $u,v$ `relation words'.

Let $B_{n,l}$ be the set of words in the $n$-generated free semigroup, whose length is at most $l$. Notice that $$|B_{n,l}|=\sum_{i=1}^{l} n^i = \frac{n^{l+1} - n}{n-1} = \left(\frac{n}{n-1}+o_l(1)\right)\cdot n^l$$
and
$$p_{n,l} := \bigg|{B_{n,l} \choose 2}\bigg| = \left( \frac{n^2}{2(n-1)^2} + o_l(1) \right) \cdot n^{2l}.$$

We recall the following fact.

\begin{fact} \label{fact}
Let $\pi\colon \F_n \twoheadrightarrow \langle x_1,\dots,x_n\ |\ \mathcal{R} \rangle$ be the natural projection. For two words $w,w'\in \langle x_1,\dots,x_n \rangle$ we have $\pi(w)=\pi(w')$ if and only if there is a sequence of words $w=w_0,w_1,\dots,w_m=w'$ in the free semigroup such that each $w_{i+1}$ is obtained from $w_i$ by replacing a subword $u$ of it by $v$, where $u=v$ is among the relation set $\mathcal{R}$. In particular, $\pi^{-1}(\pi(w))=\{w\}$ if and only if $w$ contains no relation word as a subword.
\end{fact}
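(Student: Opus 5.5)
The statement is the standard description of the congruence generated by $\mathcal{R}$, and I will prove it by identifying that congruence explicitly. Define a relation $\sim$ on $\F_n$ by $w\sim w'$ if and only if there is a finite sequence $w=w_0,w_1,\dots,w_m=w'$ ($m\ge 0$) in which each $w_{i+1}$ is obtained from $w_i$ by an elementary move: write $w_i=aub$ with $a,b$ possibly empty and $\{u,v\}\in\mathcal{R}$, and set $w_{i+1}=avb$. Because the pairs in $\mathcal{R}$ are unordered, the move from $w_i$ to $w_{i+1}$ can always be reversed. The semigroup $\langle x_1,\dots,x_n\mid\mathcal{R}\rangle$ is by definition $\F_n/\rho$, where $\rho$ is the smallest congruence containing $\mathcal{R}$. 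The claim is therefore that $\sim$ equals $\rho$.

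First I show that $\sim$ is a congruence containing $\mathcal{R}$.
\begin{itemize}
\item Reflexivity is the case $m=0$.
\item Symmetry holds because each move is reversible, so the sequence can be read backwards.
\item Transitivity holds by concatenating sequences.
\item Compatibility: if $w\sim w'$ and $z\in\F_n$, then multiplying every term of the chain on the left (or right) by $z$ gives a chain in which each step is again an elementary move, with $a$ replaced by $za$ (or $b$ by $bz$). Compatibility on both sides gives $ww''\sim w'w'''$ whenever $w\sim w'$ and $w''\sim w'''$, by the usual two-step argument.
\item Each pair $\{u,v\}\in\mathcal{R}$ is related by a single move with $a$ and $b$ empty.
\end{itemize}
Hence $\rho\subseteq\sim$.

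Conversely, any congruence containing $\mathcal{R}$ contains every elementary move $(aub,avb)$, since it is compatible with multiplication on both sides. By transitivity it then contains all of $\sim$, so $\sim\subseteq\rho$. Therefore $\pi(w)=\pi(w')$ exactly when $w\sim w'$, which is the first assertion.

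For the ``in particular'' part, suppose $w$ contains some relation word $u$ as a subword, say $w=aub$ with $\{u,v\}\in\mathcal{R}$ and $u\neq v$. Then $avb\neq w$, because cancellation in the free semigroup would force $v=u$. So $avb\in\pi^{-1}(\pi(w))$ is a second preimage.

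Conversely, suppose $w$ contains no relation word and $w\sim w'$ via a chain of length $m\ge 1$. The first move requires a subword of $w_0=w$ equal to a relation word, which is impossible. So $m=0$ and $w'=w$.

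Nothing here is a real obstacle. The only points needing care are that empty prefixes and suffixes are allowed, and that relations are unordered, so that $\sim$ is symmetric without having to adjoin inverse moves. This is classical, and the paper may simply cite it.
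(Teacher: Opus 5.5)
Your proof is correct. The paper gives no proof of this Fact; it is recalled as the standard description of the congruence generated by $\mathcal{R}$. Your argument is exactly that standard one: show that closure under elementary moves is a congruence containing $\mathcal{R}$ and is contained in every such congruence, then use cancellativity of $\F_n$ together with $u\neq v$ for the ``in particular'' part.
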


\begin{prop}
\label{finite quotients prop} Suppose that $0<\delta<\frac{1}{2}$.
\begin{enumerate}
    \item Let $\theta>2\delta$. Let $\mathcal{R}_{n,\delta,l}$ denote a random set of $\lfloor n^{2\delta l} \rfloor$ defining relations.  Then, with overwhelming probability, all relation words are of length at least $(1-\theta)l$.
    \item Let $0 < \varepsilon < 1-2\delta$. For each $1\leq i\leq l$, let $a_i$ denote the number of relation words of length $i$ (counted with multiplicity\footnote{Meaning that if $u=v,u=w$ are both relations, and $|u|=i$, then they count twice toward $a_i$.}). Then $\Pr(a_i>n^{i-\varepsilon l}) \leq 2n^{(2\delta-1+\varepsilon)l}$ and $$\Pr(\exists 1\leq i\leq l:\ a_i > n^{i-\varepsilon l}) \leq  2l \cdot n^{(2\delta-1+\varepsilon)l}.$$
\end{enumerate}
\end{prop}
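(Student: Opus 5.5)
The plan is a first-moment computation for both parts. The random relation set is a uniform set of $N:=\lfloor n^{2\delta l}\rfloor$ distinct pairs from ${B_{n,l}\choose 2}$, which has size $p_{n,l}$. By symmetry, each individual pair appears with probability $N/p_{n,l}$. So if $S$ is any set of pairs, the expected number of chosen pairs lying in $S$ is $N|S|/p_{n,l}$. Everything then reduces to bounding $|S|$ and applying Markov's inequality.

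For part (2), fix $1\le i\le l$. Each chosen pair contributes to $a_i$ once for each of its two words that has length $i$. Let $S_i$ be the set of pairs containing at least one word of length $i$. Then $|S_i|\le n^i|B_{n,l}|$, and $a_i\le 2\cdot\#\{\text{chosen pairs in }S_i\}$. Hence
\[
\mathbb{E}[a_i]\le 2N\frac{n^i|B_{n,l}|}{p_{n,l}}\le 2n^{2\delta l}\, n^i\,\frac{(\frac{n}{n-1}+o(1))n^l}{(\frac{n^2}{2(n-1)^2}+o(1))n^{2l}} = C_l\, n^{i+(2\delta-1)l},
\]
where $C_l\to \frac{4(n-1)}{n}\le 4$. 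To get the stated constant $2$, I will count more carefully. The number of ordered pairs of words with the first word of length $i$ is $n^i|B_{n,l}|$. Since $a_i$ counts pairs with multiplicity of length-$i$ words, the exact expectation is $N\cdot n^i(|B_{n,l}|-1)/p_{n,l}$ (summing over the length-$i$ word $u$ and its partner $v\ne u$). This gives
\[
\mathbb{E}[a_i]\approx n^{2\delta l}n^i\cdot\frac{2(n-1)}{n}\,n^{-l}\le 2n^{i+(2\delta-1)l}.
\]
It may be necessary to handle the $o_l(1)$ terms by an exact inequality such as $(|B_{n,l}|-1)/p_{n,l}=2/|B_{n,l}|\le 2n^{-l}$, which follows from $p_{n,l}=|B_{n,l}|(|B_{n,l}|-1)/2$ and $|B_{n,l}|\ge n^l$. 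Markov's inequality then gives
\[
\Pr(a_i>n^{i-\varepsilon l})\le 2n^{i+(2\delta-1)l}/n^{i-\varepsilon l}=2n^{(2\delta-1+\varepsilon)l}.
\]
A union bound over the $l$ values of $i$ gives the second inequality, and the bound tends to $0$ since $\varepsilon<1-2\delta$.

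For part (1), the event that some relation word has length less than $(1-\theta)l$ means $\sum_{i<(1-\theta)l}a_i\ge1$. I will sum the expectations computed above:
\[
\sum_{i<(1-\theta)l}2n^{i+(2\delta-1)l}\le \frac{2n}{n-1}\,n^{(1-\theta)l+(2\delta-1)l}=O\!\left(n^{(2\delta-\theta)l}\right)\to0,
\]
since $\theta>2\delta$. Markov's inequality then finishes part (1). Alternatively, I could apply part (2) with $\varepsilon$ slightly less than $\theta$, but the direct sum is cleaner and needs no constraint relating $\theta$ to $1-2\delta$.

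The only step requiring care is the exact identity for $\mathbb{E}[a_i]$, so that the constant comes out as exactly $2$ with no $o_l(1)$ slack. The key facts there are that the marginal probability of each pair is exactly $N/p_{n,l}$, which holds because the sampling is uniform over $N$-subsets, and that $N\le n^{2\delta l}$.
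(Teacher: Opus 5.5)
Your proof is correct. Part (2) is the paper's argument exactly: the identity $\mathbb{E}[a_i]=Nn^i(|B_{n,l}|-1)/p_{n,l}=2Nn^i/|B_{n,l}|\le 2n^{(2\delta-1)l+i}$, followed by Markov's inequality and a union bound.

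Part (1) takes a different route. The paper computes the good event's probability exactly as a hypergeometric ratio, $\Pr(A)=\binom{q}{N}/\binom{p}{N}\ge\bigl(1-\frac{p-q}{p-N}\bigr)^N$. It then bounds $p-q\le e\,n^{(2-\theta)l}$ and $p-N\ge d\,n^{2l}$, and finishes with Bernoulli's inequality. You instead apply Markov's inequality to $\sum_{i<(1-\theta)l}a_i$, reusing the expectation from part (2).

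The two bounds are essentially the same quantity. Yours is a first-moment bound comparable to $N(p-q)/p$, up to a factor of at most $2$ from the multiplicity count. The paper's is $N(p-q)/(p-N)$ after Bernoulli. Your version is shorter, gives the explicit constant $\frac{2n}{n-1}$, and avoids the binomial-coefficient algebra.

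What the paper's product formula buys is a \emph{lower} bound on $\Pr(A)$ that stays informative when the expected number of short relation words is of order one. In Proposition~\ref{prop:critical-unbounded}, at $\delta=\frac12$ with fixed $t$, the expected number of relation words of length $<t$ is of order $n^{t-1}$. There Markov's inequality gives nothing, whereas the same product bound yields $\liminf_{l}\Pr(A_l)\ge\exp\bigl(-\frac{8}{d}n^{t-1}\bigr)>0$.

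A minor slip in your unused aside: to deduce part (1) from part (2), take $\varepsilon\in[1-\theta,1-2\delta)$, not $\varepsilon$ slightly less than $\theta$. The bound $a_i\le n^{i-\varepsilon l}$ forces $a_i=0$ only for $i<\varepsilon l$. This interval is nonempty precisely because $\theta>2\delta$ (for $\theta\ge 1$ the claim is vacuous). So this route needs no extra constraint, only an extra factor of $l$.
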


\begin{proof}
\begin{enumerate}
\item Recall that $p_{n,l}={n+\dots+n^l \choose 2}$ is the number of unordered pairs of elements from $B_{n,l}$. Denote by $N:=\lfloor n^{2\delta l} \rfloor = |\mathcal{R}_{n,\delta,l}|$.  Let $k = \lceil (1-\theta)l \rceil$.
The number of unordered pairs of words in $B_{n,l}$ both of which are of length at least $k$ is
\[
q_{n,k,l} :=  {n^k+n^{k+1}+\dots+n^l \choose 2}.
\]
We omit the subscripts of $p,q$ for brevity.
Denote by $A$ the event that all words in $\mathcal{R}_{n,\delta,l}$ are of length at least $k$.
Then
    \begin{align} \label{eq:1}
    \Pr(A) = \frac{{q \choose {N}}}
    { {p \choose {N}} } & =
    \frac{q(q-1)\dots (q-N+1)}{p(p-1)\dots (p-N+1)} \\ & \geq \left(\frac{q-N}{p-N}\right)^N = \left(1 - \frac{p-q}{p-N}\right)^N. \nonumber
    \end{align}

Now
\begin{align*}
    p-q & = {n+\dots+n^l \choose 2} - {n^k+\dots+n^l \choose 2} \\ & = {\frac{n^{l+1}-n}{n-1} \choose 2} - {\frac{n^{l+1}-n^k}{n-1} \choose 2} \\  & \leq \frac{(n^{l+1}-n)^2}{2(n-1)^2} - \frac{(n^{l+1}-n^k-n+1)^2}{2(n-1)^2} \\
    & \leq \frac{1}{2(n-1)^2} \left( (n^{l+1}-n)^2 - (n^{l+1} - 2n^k)^2 \right) \\
    & \leq \frac{1}{2(n-1)^2} \cdot 4n^{l+k+1} = \frac{2n}{(n-1)^2} \cdot n^{l+k}.
\end{align*}
Recall that, for some $c>0$ we have $p\geq c \cdot n^{2l}$, so $p-N\geq c \cdot n^{2l} - n^{2\delta l}$ and, since $\delta<1$, we can write $p-N\geq d\cdot n^{2l}$ for some $d>0$. By the above computation, $p-q \leq e\cdot n^{(2-\theta)l}$ for some $e>0$.
It follows from \eqref{eq:1} that (for $C=\frac{e}{d}$)
\begin{align*}
    \Pr(A) & \geq \left(1-\frac{e\cdot n^{(2-\theta)l}}{d\cdot n^{2l}}\right)^{N} \\ & \geq (1-C\cdot n^{-\theta l})^{n^{2\delta l}} \geq 1-C\cdot n^{(2\delta-\theta)l} \xrightarrow{l\rightarrow \infty} 1
\end{align*}
since $\theta>2\delta$.

\item

Writing $a_i$ as a sum of indicator variables,
\begin{align*}
\mathbb{E}[a_i] & =  \mathbb{E}\left[ \sum_{|w|=i} \sum_{v\in B_{n,l}\setminus \{w\}} \mathbb{I}_{\{w,v\} \in \mathcal{R}} \right] \\ & = n^i\cdot(|B_{n,l}|  - 1) \cdot \frac{N}{{B_{n,l} \choose 2}} \\ & = \frac{2Nn^i}{n+\dots+n^l} \\
&\leq 2n^{2\delta l + i-l}.
\end{align*}
By Markov's inequality,
\begin{align*}
    \Pr(a_i > n^{i-\varepsilon l}) & \leq \frac{\mathbb{E}[a_i]}{n^{i-\varepsilon l}} \\ & \leq \frac{2n^{(2\delta-1)l+i}}{n^{i-\varepsilon l}} = 2n^{(2\delta-1+\varepsilon) l},
\end{align*}
as claimed. By the union bound,
\[
    \Pr(\exists 1\leq i\leq l:\ a_i > n^{i-\varepsilon l}) \leq 2l \cdot n^{(2\delta-1+\varepsilon)l}.
\]
\end{enumerate}
\end{proof}

We next show that for density $\delta > \frac{1}{2}$, a random semigroup is trivial with overwhelming probability. This echoes the situation for Gromov random groups, which are either trivial or cyclic of order $2$ with overwhelming probability when $\delta>\frac{1}{2}$.

\begin{thm}\label{thm:big-density}
Let $n\geq 2.$ For every $\delta > \frac{1}{2}$,  $M_{n,\delta,l}$ is trivial with overwhelming probability.
\end{thm}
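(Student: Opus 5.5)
To prove Theorem~\ref{thm:big-density}, it suffices to show that with overwhelming probability every generator is identified with every other generator. We also need a relation of the form $x_1=x_1x_1$, or more generally one forcing idempotency together with absorption. A cleaner target is this: it is enough to show that with overwhelming probability each generator $x_j$ equals some word that, combined with the other relations, collapses everything. The plan is to look at relations whose two sides are short enough to be controlled but numerous enough to be forced. Since $\delta>\frac12$, the expected number of relations $\{u,v\}$ with $u$ a fixed word of length $i$ and $v$ arbitrary is $\approx 2n^{(2\delta-1)l}\to\infty$, which is exponentially large. So for each fixed short word $u$, there is with overwhelming probability some relation $u=v$. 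What we really need are relations between prescribed pairs, so the relevant quantity is the number of pairs falling in a prescribed set $S\subseteq\binom{B_{n,l}}{2}$.

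The first step is a general lemma. If $S$ has size at least $c\, n^{2l-\eta l}$ with $\eta<2\delta-1$, then with probability at least $1-\exp(-c' n^{(2\delta-1-\eta)l})$ the random relation set meets $S$. This follows from the computation in Proposition~\ref{finite quotients prop}(1), which bounds $\Pr(\text{miss } S)\le (1-|S|/p)^N$. Since the failure probability is doubly exponentially small, we may take a union bound over exponentially many (at most $n^{o(l)}$, or even $n^{\gamma l}$ for small $\gamma$) families $S$.

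The second step picks the families. Fix $m\approx \gamma l$ with $\gamma$ small. For each generator $x_j$ and each word $w$ of length $\le l-1$, take $S_{j}=\{\{x_j w, w\}: |w|\ge l-m\}$. This has only about $n^l$ elements, which is too few. Instead, to prove triviality we use relations $\{ax,\, ay\}$ sharing long prefixes. Here is the concrete approach I would try. Take the words $u$ of length $l$ and the set $T$ of pairs $\{u x_j,\ u\}$ with $|u|=l-1$; this set is also too small.

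The better approach is the sequential one. Order the words of $B_{n,l}$ and consider the graph on $B_{n,l}$ whose edges are the relations. This is a uniform random graph with $N=n^{2\delta l}$ edges on $V\approx n^l$ vertices, so the average degree is $n^{(2\delta-1)l}\gg \log V$. Hence with overwhelming probability the graph is connected, by the standard Erd\H{o}s--R\'enyi connectivity threshold in the $G(V,N)$ model, which can be checked by a union bound over cuts. Therefore all words of length $\le l$ are equal in $M_{n,\delta,l}$. In particular $x_i=x_j$ for all $i,j$, and $x_1=x_1^2=x_1x_2$, and so on. Every element is then a product of generators and equals $x_1$. Moreover, for any word $w$ we have $w=x_1^{|w|}=x_1$ using $x_1=x_1^2$. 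So $M$ is trivial.

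The main obstacle is the connectivity estimate. The union bound over vertex subsets $A$ with $|A|=k\le V/2$ gives the sum
\[\sum_k \binom{V}{k}(1-k(V-k)/p)^N\le \sum_k V^k e^{-kN/(2p)\cdot V}.\]
Since $NV/p\approx n^{(2\delta-1)l}$ dominates $\log V=l\log n$, this sum tends to $0$. The $G(V,N)$ (without replacement) setting is handled by the same ratio-of-binomials bound as in \eqref{eq:1}.
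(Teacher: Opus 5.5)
Your final argument is correct and is essentially the paper's proof: you view the relations as a uniform $N$-edge random graph on $B_{n,l}$, observe that $N/|B_{n,l}|\asymp n^{(2\delta-1)l}\gg \log|B_{n,l}|$ so the graph is connected with overwhelming probability, and conclude $x_1=\dots=x_n=x_1^2$, hence triviality. The only differences are that you sketch the connectivity step yourself via a union bound over cuts, using $\Pr(\text{miss } S)\le(1-|S|/p)^N$ (an upper bound, not the lower bound in \eqref{eq:1}), where the paper simply cites the Erd\H{o}s--R\'enyi threshold, and that the abandoned ``first step''/``second step'' material at the start is unnecessary and should be deleted.
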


\begin{proof}
Consider the random graph $\mathcal{G}$ with vertex set $B_{n,l}$, where two vertices $u$ and $v$ are joined by an edge if the relation $u=v$ appears among the set $\mathcal{R}_{n,\delta,l}$ of defining relations of $M_{n,\delta,l}$. Then $\mathcal{G}$ is an Erd\H{o}s--R\'enyi graph with $k\coloneqq|B_{n,l}|$ vertices and $N\coloneqq \lfloor n^{2\delta l} \rfloor$ edges.

It is well known that an Erd\H{o}s--R\'enyi graph $\mathcal{G}(k,N)$ is connected with probability tending to $1$ as $k\rightarrow \infty$ whenever $N \gg k\log k$.
In our setting, $k \asymp n^l$, so this condition holds for all sufficiently large $l$ since
$\delta>\frac{1}{2}$.

If $\mathcal{G}$ is connected, then the elements of $B_{n,l}$ are all congruent modulo the relations $\mathcal{R}_{n,\delta,l}$. In particular, with overwhelming probability, we have $x_1=\dots=x_n$ and $x_1^2=x_1$ in $M_{n,\delta,l}$. Thus, $M_{n,\delta,l}$ is trivial with overwhelming probability.
\end{proof}

\section{Random semigroups with density $\delta<\frac{1}{2}$ are large}\label{sec:small-density}

\subsection{Free subsemigroups}

\begin{lem} \label{lem free subsemigroup}
    Let $M = \langle x_1,\dots,x_n \mid u_1=v_1,\dots,u_m=v_m \rangle$ be a finitely presented semigroup and let $\pi\colon \langle x_1,\dots, x_n \rangle \twoheadrightarrow M$ be the natural projection. Let $w_1, w_2 \in \langle x_1 \dots,x_n \rangle$ be distinct words in the free semigroup such that $$|w_1| = |w_2|\geq |u_1|,\dots,|u_m|,|v_1|,\dots,|v_m|.$$
    If none of $u_1,\dots,u_m,v_1,\dots,v_m$ is a subword of any of $w_1^2, w_1w_2, w_2w_1, w_2^2$, then $\pi(w_1),\pi(w_2)$ generate a free subsemigroup of $M$.
\end{lem}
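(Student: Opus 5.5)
We show that every word in $w_1,w_2$ contains no relation word as a subword, and then apply Fact~\ref{fact}. Let $L=|w_1|=|w_2|$.

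\textbf{Reduction.} Suppose $\pi(w_{i_1}\cdots w_{i_r})=\pi(w_{j_1}\cdots w_{j_s})$ for two words over the alphabet $\{w_1,w_2\}$. Call a word $W\in\F_n$ \emph{rigid} if it contains none of $u_1,\dots,u_m,v_1,\dots,v_m$ as a subword. By the last clause of Fact~\ref{fact}, if $W=w_{i_1}\cdots w_{i_r}$ is rigid, then $\pi^{-1}(\pi(W))=\{W\}$. Hence $w_{i_1}\cdots w_{i_r}=w_{j_1}\cdots w_{j_s}$ holds in $\F_n$.

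\textbf{Rigidity.} Let $W=w_{i_1}\cdots w_{i_r}$, viewed as a concatenation of $r$ blocks of length $L$. Any relation word $u$ has $|u|\le L$. So if $u$ occurs in $W$, this occurrence starts in some block $k$ and ends in block $k$ or block $k+1$. It therefore lies inside the length-$2L$ subword $w_{i_k}w_{i_{k+1}}$, or inside $w_{i_k}$ alone when $k=r$. Now $w_{i_k}$ is a subword of $w_{i_k}^2$, and $w_{i_k}w_{i_{k+1}}$ is one of $w_1^2,w_1w_2,w_2w_1,w_2^2$. Either way this contradicts the hypothesis. So $W$ is rigid.

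\textbf{Freeness in $\F_n$.} It remains to see that $w_1,w_2$ freely generate a free subsemigroup of $\F_n$. Suppose $w_{i_1}\cdots w_{i_r}=w_{j_1}\cdots w_{j_s}$ in $\F_n$. Comparing lengths gives $rL=sL$, so $r=s$. Both sides are cut into consecutive blocks of the same length $L$, so comparing block by block gives $w_{i_k}=w_{j_k}$ for every $k$. Since $w_1\ne w_2$, this means $i_k=j_k$ for all $k$, so the two words over $\{w_1,w_2\}$ coincide.

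Hence $\pi$ restricted to the subsemigroup generated by $w_1,w_2$ is injective, and $\pi(w_1),\pi(w_2)$ generate a free subsemigroup of $M$ of rank two.

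The only step needing care is the rigidity argument: the bound $|u|\le L$ is exactly what guarantees that an occurrence of a relation word spans at most two consecutive blocks. The other steps are immediate.
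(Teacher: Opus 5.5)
Your proof is correct and follows essentially the same route as the paper: every product of $w_1,w_2$ avoids all relation words because $|u_i|,|v_i|\le |w_1|$ forces any occurrence into two consecutive blocks, Fact~\ref{fact} then gives equality in $\F_n$, and comparing blocks of equal length recovers the indices. Your explicit handling of an occurrence lying inside the last block alone (via $w_{i_r}\subseteq w_{i_r}^2$) is a small detail that the paper leaves implicit.
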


\begin{proof}
Suppose that $\pi(w_{i_1}\dots w_{i_k})=\pi(w_{j_1} \dots w_{j_{k'}})$ where $i_l, j_{l'}\in \{1, 2\}$ for all $1\leq l \leq k$ and $1\leq l'\leq k'$. Notice that $w_{i_1}\dots w_{i_k}$ and $w_{j_1} \dots w_{j_{k'}}$ do not contain any relation word as a subword; indeed, since $|w_1|=|w_2|\geq |u_1|,\dots,|u_m|,|v_1|,\dots,|v_m|$, any occurrence of a relation word would have to fall within $w_{i_t}w_{i_{t+1}}$ or $w_{j_t}w_{j_{t+1}}$, which is impossible by assumption. Hence, by Fact \ref{fact},
\begin{align*} \{w_{i_1}\cdots w_{i_k}\} & = \pi^{-1}\left(\pi(w_{i_1}\cdots w_{i_k})\right)\\ & = \pi^{-1}\left(\pi(w_{j_1} \dots w_{j_{k'}})\right) = \{w_{j_1}\dots w_{j_{k'}}\}
\end{align*}
and since $|w_1|=|w_2|$ and $w_1\neq w_2$, we obtain that $k'=k$ and $i_1=j_1,\dots,i_k=j_k$. It follows that $\langle \pi(w_1),\pi(w_2)\rangle$ is a free subsemigroup of~$M$.
\end{proof}

\begin{prop}\label{prop:free-subsemigroups}
    Let $n\geq 2$. For every $\delta <\frac{1}{2}$, $M_{n,\delta,l}$ contains a nonabelian free subsemigroup with overwhelming probability.
\end{prop}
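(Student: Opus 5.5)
The plan is to apply Lemma~\ref{lem free subsemigroup} with two explicitly chosen words of length exactly $l$, and then to show that with overwhelming probability no relation word occurs as a subword of the four products. Every relation word has length at most $l$, so the length hypothesis of the lemma is automatic once $|w_1|=|w_2|=l$. The whole task is therefore to bound the probability that some relation word is a subword of one of $w_1^2, w_1w_2, w_2w_1, w_2^2$.

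Fix two distinct words $w_1,w_2$ of length $l$, for instance $w_1=x_1^l$ and $w_2=x_1^{l-1}x_2$. Let $S$ be the set of all subwords of the four products $w_iw_j$. Each product has length $2l$, so it has at most $2l\cdot l$ subwords of length at most $l$ (choose a starting position and a length). Hence $|S|\le 8l^2$, which is polynomial in $l$. Moreover, for each length $i$ the number of elements of $S$ of length $i$ is at most $8l$.

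Next I bound the probability that a fixed word $w$ with $|w|=i$ is a relation word. By the computation in Proposition~\ref{finite quotients prop}(2), the expected number of relations containing $w$ is
\[
(|B_{n,l}|-1)\cdot\frac{N}{p_{n,l}}=\frac{2N}{|B_{n,l}|}\le 2n^{(2\delta-1)l}.
\]
By Markov's inequality (or the union bound), this also bounds $\Pr(w \text{ is a relation word})$. Summing over $w\in S$ gives
\[
\Pr(\exists\, w\in S \text{ that is a relation word})\le 8l^2\cdot 2n^{(2\delta-1)l}\xrightarrow{l\to\infty}0,
\]
because $\delta<\tfrac12$. With overwhelming probability, then, none of the relation words is a subword of the four products. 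Lemma~\ref{lem free subsemigroup} now shows that $\pi(w_1),\pi(w_2)$ generate a free subsemigroup.

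This subsemigroup must also be nonabelian, i.e.\ of rank two. Freeness on the two generators $\pi(w_1)\neq\pi(w_2)$ in the sense proved in the lemma already gives this: $w_1w_2$ and $w_2w_1$ are distinct words with no relation subwords, so $\pi(w_1w_2)\ne\pi(w_2w_1)$. The bound is so crude that I expect no step to be a real obstacle. The only point needing care is that the per-word probability bound $2n^{(2\delta-1)l}$ holds uniformly even for short words; it does, since the count $2N/|B_{n,l}|$ does not depend on $|w|$.
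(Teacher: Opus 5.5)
Your proof is correct, and it takes a different and simpler route than the paper's.

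\textbf{The paper's route.} The paper picks $w_1,w_2$ uniformly at random in $X^l$. It bounds the conditional probability that one of the four products contains a relation word by $8l\sum_i a_i n^{-i}$. It then invokes Proposition~\ref{finite quotients prop}(2), with an auxiliary $\varepsilon\in(0,1-2\delta)$, to control the length-stratified counts $a_i\le n^{i-\varepsilon l}$. This gives the bound $2l\,n^{(2\delta-1+\varepsilon)l}+8l^2n^{-\varepsilon l}$. A tacit averaging step then turns ``a random pair works with high joint probability'' into ``with overwhelming probability some pair works''.

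\textbf{Your route.} You fix an explicit pair and use the uniformity of the model: every single word of $B_{n,l}$, short or long, is a relation word with probability at most $2N/|B_{n,l}|\le 2n^{(2\delta-1)l}$. Since only $O(l^2)$ specific words need to be avoided, one union bound gives failure probability at most $16l^2n^{(2\delta-1)l}$. This buys several things:
\begin{itemize}
\item explicit generators $x_1^l$ and $x_1^{l-1}x_2$;
\item a better error rate;
\item no need for the $a_i$ estimates;
\item the paper's ``typical pair'' statement as well, by averaging, since your bound is uniform over all pairs of distinct length-$l$ words.
\end{itemize}

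\textbf{What the paper's route buys.} Its key estimate is deterministic given the relation set. For any presentation by words of length at most $l$ with $a_i\le n^{i-\varepsilon l}$ for all $i$, a uniformly random pair of length-$l$ words has no relation word in its four products with probability at least $1-8l^2n^{-\varepsilon l}$. So it applies to any relation set obeying those aggregate counts, including adversarially chosen ones. Against such sets a fixed pair like $x_1^l,\,x_1^{l-1}x_2$ could be targeted, so your argument genuinely relies on the uniform sampling of the model.
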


\begin{proof}
Let $N=\lfloor n^{2\delta l} \rfloor$ and let $M_{n,\delta, l} = \langle x_1,\dots,x_n \mid \mathcal{R}\rangle$ where $\mathcal{R}$ is $u_1=v_1,\dots,u_N=v_N$.

Denote $X=\{x_1,\dots,x_n\}$ and choose a pair of words $w_1, w_2\in X^l$ uniformly at random (with overwhelming probability, they are distinct).  Let $W_1 = w_1^2$, $W_2 = w_1w_2$, $W_3 = w_2w_1$, and $W_4 = w_2^2.$ For each one of them, the probability that it contains a given word $P$ of length $s$ ($s\leq l$) as subword is at most $2l\cdot n^{-s}$, since there are (at most) $2l$ possible starting positions, and the probability that a random sequence of $s$ letters coincides with $P$ is $n^{-s}$. Notice that since $|w_1|=|w_2|=l$ and $s\leq l$, the induced distribution on each length-$s$ window in each one of $W_1,\dots,W_4$ is uniform (each position of $w_1,w_2$ appears only once within that window). Denote by $\mathcal{E}$ the event that
\[ \exists \ 1\leq i\leq 4:\ W_i\ \substack{\text{contains a relation word} \nonumber \\ \text{as a subword.}} \]
We have
\begin{equation}\label{eq:2}
\Pr(\mathcal{E} \;|\; a_1,\dots,a_l) \leq 4 \cdot \sum_{i=1}^{l} a_i \cdot 2l \cdot n^{-i} = 8l\sum_{i=1}^{l} a_i n^{-i}
\end{equation}
where $a_i$ denotes the number of length-$i$ relation words in $\mathcal{R}$. Fix an arbitrary $0 < \varepsilon < 1-2\delta$. By Proposition \ref{finite quotients prop},
\[
    \Pr(\exists 1\leq i\leq l:\ a_i > n^{i-\varepsilon l}) \leq 2l \cdot n^{(2\delta-1+\varepsilon)l}
\]
and so by \eqref{eq:2},
\begin{align*}
    \Pr(\mathcal{E}) & \leq \Pr(\exists 1\leq i\leq l:\ a_i > n^{i-\varepsilon l}) + \Pr(\mathcal{E}\ \cap \ \forall 1\leq i\leq l:\ a_i \leq n^{i-\varepsilon l}) \\ & \leq 2l \cdot n^{(2\delta-1+\varepsilon)l} + 8l\sum_{i=1}^{l} n^{i-\varepsilon l}\cdot n^{-i} \\ & = 2l \cdot n^{(2\delta-1+\varepsilon)l} + 8l^2 \cdot n^{-\varepsilon l} \xrightarrow{l\rightarrow \infty} 0
\end{align*}
since $\varepsilon>0$ and $2\delta-1+\varepsilon<0$. Therefore, with overwhelming probability, there exist $w_1,w_2$ of length $l$ such that none of $w_1^2,w_1w_2,w_2 w_1,w_2^2$ contains a relation word as a subword, and by Lemma \ref{lem free subsemigroup}, the images of $w_1,w_2$ in $M_{n,\delta,l}$ generate a free subsemigroup.
\end{proof}

\begin{cor}
    Let $n\geq 2$. For every $\delta < \frac{1}{2},$ $M_{n,\delta,l}$ has exponential growth with overwhelming probability.
\end{cor}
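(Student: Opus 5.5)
The plan is to deduce this directly from Proposition~\ref{prop:free-subsemigroups}. With overwhelming probability, $M=M_{n,\delta,l}$ contains elements $a=\pi(w_1)$ and $b=\pi(w_2)$ that generate a free subsemigroup of rank two. Here $w_1,w_2$ are words of length exactly $l$ in the generators $x_1,\dots,x_n$. So it suffices to show that a finitely generated semigroup containing such a pair has exponential growth. That step is deterministic, and the probabilistic content is already contained in the proposition.

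For the growth statement, let $\gamma_M(r)$ denote the number of elements of $M$ expressible as products of at most $r$ generators $\pi(x_i)$. Consider the $2^k$ products $\pi(w_{i_1}\cdots w_{i_k})$ with $i_j\in\{1,2\}$.
\begin{itemize}
\item Freeness of $\langle a,b\rangle$ means these products are pairwise distinct elements of $M$.
\item Each of them is a product of exactly $kl$ generators.
\end{itemize}
Hence $\gamma_M(kl)\ge 2^k$ for all $k\ge 1$. It follows that
\[
\liminf_{r\to\infty}\gamma_M(r)^{1/r}\ge 2^{1/l}>1,
\]
where monotonicity of $\gamma_M$ handles values of $r$ that are not multiples of $l$. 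This is exponential growth.

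The only point needing care is that exponential growth does not depend on the chosen finite generating set. Here, however, we work with the fixed generators $\pi(x_1),\dots,\pi(x_n)$, so nothing further is needed. Note also that $l$ is fixed for each realization of the random semigroup, so the growth rate $2^{1/l}$, although it tends to $1$ as $l\to\infty$, is strictly greater than $1$ for each sample.

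Consequently, the event ``$M_{n,\delta,l}$ has exponential growth'' contains the event of Proposition~\ref{prop:free-subsemigroups}, and its probability tends to $1$. I expect no real obstacle. The only subtlety is recording that freeness gives distinct elements of $M$, not merely distinct words in $\F_n$, and Lemma~\ref{lem free subsemigroup} provides exactly this.
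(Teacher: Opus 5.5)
Your proposal is correct and follows the same route as the paper, which states this corollary as an immediate consequence of Proposition~\ref{prop:free-subsemigroups}; your count $\gamma_M(kl)\ge 2^k$ simply writes out the standard fact that a finitely generated semigroup containing a free subsemigroup of rank two has exponential growth. You also do not need the two free generators to come from words of length exactly $l$, since any free pair $a,b$ whose word lengths are at most $K$ already gives $\gamma_M(kK)\ge 2^k$.
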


\subsection{Random semigroups with $\delta<\frac{1}{2}$ are Golod--Shafarevich}

\begin{defn}
\label{def golod}
    Let $\mathbb{K}$ be a field and let $A = \mathbb{K}\langle x_1,\dots,x_n \rangle / \langle \mathcal{R} \rangle$ be a finitely generated $\mathbb{K}$-algebra. Given $f\in \mathbb{K}\langle x_1,\dots,x_n \rangle$ with zero constant term, let $\nu(f)$ denote the minimum length of a monomial on which $f$ is supported. Let $r_i := \{f\in \mathcal{R}\ |\ \nu(f)=i\}$. We say that $A$ is a (filtered) \textbf{Golod-Shafarevich algebra} if, for some $0 < \tau < 1$, the following is convergent and has a negative value
    \[
    1 - n \tau + \sum_{i=0}^{\infty} r_i \tau^i.
    \]
\end{defn}

Recall that Golod-Shafarevich algebras are of exponential growth and enjoy many additional `largeness' properties (see an example in Corollary~\ref{cor:periodic-quotient} below).

\begin{prop} \label{prop:GS}
Fix $n\geq 2$ and $0 < \delta < \frac{1}{2}$. Then $\mathbb{K}[M_{n,\delta,l}]$ is a filtered Golod-Shafarevich algebra with overwhelming probability.
\end{prop}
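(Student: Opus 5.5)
The semigroup algebra $\mathbb{K}[M_{n,\d,l}]$ is presented as $\mathbb{K}\langle x_1,\dots,x_n\rangle/\langle u_i-v_i \mid i\in I\rangle$. The relators are the binomials $f_i=u_i-v_i$. Since $u_i\neq v_i$, each $f_i$ is nonzero with zero constant term, and $\nu(f_i)=\min(|u_i|,|v_i|)$. So $r_j$, the number of relators with $\nu=j$, is at most $a_j$, the number of relation words of length $j$ counted with multiplicity as in Proposition~\ref{finite quotients prop}. Moreover $r_j=0$ for $j>l$. The Golod--Shafarevich series is therefore a polynomial, and convergence is automatic. It remains to find some $\tau\in(0,1)$ with
\[
1-n\tau+\sum_{j=1}^{l} r_j\tau^j<0 .
\]

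I will take $\tau$ slightly larger than $1/n$, say $\tau=\frac{2}{n+1}$. This lies in $(0,1)$ because $n\ge 2$, and it gives $1-n\tau=\frac{1-n}{n+1}<0$, a fixed negative constant $-c_n$. It then suffices to show that, with overwhelming probability, $\sum_j r_j\tau^j\to 0$.

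To control this sum, fix $0<\varepsilon<1-2\d$ and $\theta$ with $2\d<\theta$. By Proposition~\ref{finite quotients prop}, with overwhelming probability the following two facts hold simultaneously:
\begin{enumerate}
\item every relation word has length at least $(1-\theta)l$;
\item $a_j\le n^{j-\varepsilon l}$ for all $j$.
\end{enumerate}
On this event,
\[
\sum_j r_j\tau^j\le \sum_{(1-\theta)l\le j\le l} n^{j-\varepsilon l}\tau^j=n^{-\varepsilon l}\sum_{j\ge (1-\theta)l}^{l}(n\tau)^j\le l\, n^{-\varepsilon l}(n\tau)^l .
\]
Here $n\tau=\frac{2n}{n+1}>1$, so the last factor grows, and the choice of $\tau$ must be revisited.

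The main obstacle is exactly this competition between the factor $(n\tau)^l$ and the gain $n^{-\varepsilon l}$. I resolve it by letting $\tau$ depend on $\varepsilon$: take $\tau=n^{-1+\varepsilon/2}$. Then
\[
\sum_j r_j\tau^j\le l\,n^{-\varepsilon l}n^{\varepsilon l/2}=l\,n^{-\varepsilon l/2}\to 0,
\]
using $n\tau>1$ and bounding every term by the top one. Meanwhile $1-n\tau=1-n^{\varepsilon/2}$ is a fixed negative constant, and $\tau<1$ as long as $\varepsilon<2$, which holds since $\varepsilon<1$.

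Hence the Golod--Shafarevich polynomial is eventually negative on the high-probability event. The probability of the complementary event is at most $2l\,n^{(2\d-1+\varepsilon)l}\to 0$ by Proposition~\ref{finite quotients prop}(2). This bound is all the argument needs: part (1) of that proposition, the length lower bound, turns out to be unnecessary.
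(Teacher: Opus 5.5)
Your proof is correct and is essentially the paper's argument: you bound $r_j\le a_j\le n^{j-\varepsilon l}$ using Proposition~\ref{finite quotients prop}(2) and take $\tau=n^{\alpha-1}$ with $0<\alpha<\varepsilon$. You choose $\alpha=\varepsilon/2$ and bound the geometric sum by $l$ times its top term rather than summing it exactly; in a final write-up you can drop the abandoned choice $\tau=\frac{2}{n+1}$ and the appeal to part (1), since, as you note, neither is needed.
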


\begin{proof}
Let $M_{n,\delta,l}=\langle x_1,\dots,x_n \mid \mathcal{R} \rangle$ where $\mathcal{R} \subseteq {B_{n,l} \choose 2}$ is a random subset of size $N=\lfloor n^{2\delta l} \rfloor$. The semigroup algebra $\mathbb{K}[M_{n,\delta,l}]$ has a presentation $\mathbb{K}\langle x_1,\dots,x_n \rangle / \langle u-v \mid \{u,v\}\in \mathcal{R} \rangle$. Given a relation $u=v$ in $\mathcal{R}$, notice that $\nu(u-v)=\min\{|u|,|v|\}$. Let $r_i$ be the number of relations $u=v$ in $\mathcal{R}$ for which $\min\{|u|,|v|\}=i$.
Recall that in Proposition \ref{finite quotients prop}, we defined $a_i$ to be the number of relation words of length $i$ (counted with multiplicity). Hence, $r_i \leq a_i$. Fix any $ 0 < \varepsilon < 1-2\delta$.  By Proposition \ref{finite quotients prop},
\[
    \Pr(\exists 1\leq i\leq l:\ a_i > n^{i-\varepsilon l}) \leq 2l \cdot n^{(2\delta-1+\varepsilon)l}
\]
and so with overwhelming probability we may assume that $r_i \leq a_i\leq n^{i-\varepsilon l}$ for all $1\leq i\leq l$. In this case, fix some $0<\alpha<\varepsilon$ and take $\tau=n^{\alpha-1}$ and
\begin{align*}
1 - n \tau + \sum_{i=0}^{\infty} r_i \tau^i & \leq 1 - n\tau + \sum_{i=1}^{l} n^{i-\varepsilon l} \tau^i \\ & =
1 - n^{\alpha} + n^{-\varepsilon l}\sum_{i=1}^{l} n^{\alpha i} \\ & = 1 - n^\alpha + n^{-\varepsilon l} \cdot \frac{n^{\alpha(l+1)} - n^\alpha}{n^\alpha-1} \\ & \leq 1 - n^\alpha + n^{(\alpha-\varepsilon)l} \cdot \frac{n^\alpha}{n^\alpha - 1}
\end{align*}
which, when $l\gg 1$, is negative, as $\alpha-\varepsilon<0$. It follows that $\mathbb{K}[M_{n,\delta,l}]$ is Golod-Shafarevich with overwhelming probability.
\end{proof}

A sample evidence of the largeness of $M_{n,\delta,l}$ for $\delta<\frac{1}{2}$ implied by Proposition \ref{prop:GS} is the following (probably well-known) Burnside-type result (in the spirit of Wilson; see also Ershov):

\begin{cor}\label{cor:periodic-quotient}
    For every $n\geq 2$ and $\delta<\frac{1}{2}$, the semigroup $M_{n,\delta,l}$ has a periodic quotient of bounded index with exponential growth with overwhelming probability.
\end{cor}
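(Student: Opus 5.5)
The plan is to add to the relations of $\mathbb{K}[M_{n,\delta,l}]$ all identities $w^d=w^{2d}$ (or $w^{d}=w^{d+e}$) for a fixed exponent $d$ and all words $w$, while staying Golod--Shafarevich. Wilson's trick and Ershov's refinement do this on the algebra level; here we have to do it on the semigroup level. By Proposition~\ref{prop:GS}, with overwhelming probability there is $\tau=n^{\alpha-1}\in(0,1)$ with
\[
1-n\tau+\sum_i r_i\tau^i=-\eta<0,
\]
where $\eta>0$ is bounded away from $0$ uniformly in $l$. This holds because the bound obtained there is $1-n^\alpha+o_l(1)$.

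First, fix an integer $d$, to be chosen, and consider the semigroup
\[
M' = \langle x_1,\dots,x_n\mid \mathcal{R},\ w^{d}=w^{2d}\ \ (w\in\F_n)\rangle.
\]
Every element $s$ of $M'$ satisfies $s^d=s^{2d}$, so $M'$ is periodic of bounded index and is a quotient of $M_{n,\delta,l}$. Its semigroup algebra is $\mathbb{K}[M_{n,\delta,l}]$ modulo the extra relators $w^d-w^{2d}$, each with $\nu(w^d-w^{2d})=d|w|$. The number of such relators with $\nu=i$ is at most $n^{i/d}$. The new Golod--Shafarevich series is therefore bounded by
\[
-\eta+\sum_{k\ge1} n^k\tau^{dk}.
\]
Since $n\tau^d=n^{1-d(1-\alpha)}$, this extra sum is a geometric series with ratio $n^{1-d(1-\alpha)}$. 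Choosing $d$ large enough makes the sum smaller than $\eta$, and $d$ depends only on $n,\alpha,\eta$, hence not on $l$. So $\mathbb{K}[M']$ is filtered Golod--Shafarevich.

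Second, we invoke the standard Golod--Shafarevich theorem: a filtered GS algebra is infinite dimensional and in fact has exponential growth. The proof goes via the inequality $H_A(t)(1-nt+\sum r_it^i)\ge 1/(1-t)$ coefficientwise in the filtered setting, which forces the Hilbert series to diverge at $\tau<1$. This gives growth rate at least $1/\tau>1$. Since $\mathbb{K}[M']$ has the elements of $M'$ as a basis, and the filtration by word length matches, $M'$ has exponential growth.

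The main obstacle is the exponential-growth conclusion from the GS property. It needs the sharper form of the inequality, where the Hilbert series has radius of convergence at most $\tau$. It also requires care because the set of relators is infinite. I would handle this by truncating to finitely many relators at each degree and passing to the limit, which works since each graded piece only sees finitely many relators. A secondary issue is verifying that the margin $\eta$ is uniform in $l$, so that $d$ is independent of $l$; this follows from the explicit estimate in the proof of Proposition~\ref{prop:GS}.
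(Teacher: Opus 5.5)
Your proposal is correct and is essentially the paper's argument: the paper adds the relators $w^{k+1}-w^k$ instead of $w^d-w^{2d}$, and bounds the extra Golod--Shafarevich contribution by the same geometric series $\sum_{i\ge 1} n^i\tau^{ki}$. Your extra care only makes explicit what the paper leaves implicit, namely that the exponent can be chosen uniformly in $l$, and how growth of $M'$ follows from $\mathbb{K}[M']$ being Golod--Shafarevich (strictly, the inequality controls the augmentation-ideal filtration, and $\dim A/\omega^{k+1}\le \dim F_k$ transfers this to word-length growth, so the two filtrations do not literally match).
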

\begin{proof}
Let $r_i$ denote the number of defining relations $f$ of $\mathbb{K}[M_{n,\delta,l}]$ with $\nu(f)=i$.
By Proposition \ref{prop:GS}, there is some $0<\tau<1$ such that $h := 1 - n \tau + \sum_{i=0}^{\infty} r_i \tau^i < 0$. Take $k\gg 1$ such that $\sum_{i=1}^{\infty} n^i \tau^{ki} = \frac{n\tau^k}{1-n \tau^k} < -h$. Then, adding to $\mathbb{K}[M_{n,\delta,l}]$ the relations of the form $f_w:=w^{k+1}-w^k$ for all $w$ word in $x_1,\dots,x_n$, we add $n^i$ relations $f_w$ with $\nu(f_w)=ki$, and
$$ 1 - n\tau +\sum_{i=0}^{\infty} r_i \tau^i + \sum_{i=1}^{\infty} n^i \tau^{ki} < 0,$$
hence $\mathbb{K}[M_{n,\delta,l}]/\langle w^{k+1}-w^k\ :\ w\in M_{n,\delta,l} \rangle$ is again a (filtered) Golod-Shafarevich algebra.
\end{proof}

\section{Small cancellation and Finite quotients}\label{sec:small-cancellation}

\subsection{Small Cancellation Properties}

Let $\langle x_1,\dots,x_n \rangle$ be a free semigroup and let $\mathcal{R}$ be a set of relations.

\begin{defn}
    A nonempty word $p$ is called a \textbf{piece} if it either appears as a subword in two distinct relation words, or within the same relation word in distinct positions.
\end{defn}

\begin{defn}
   Let $m\in \mathbb{N}$. We say that $\langle x_1,\dots,x_n \mid \mathcal{R} \rangle$ satisfies \textbf{condition $C(m)$} if no relation word can be expressed as a product of fewer than $m$ pieces.
\end{defn}

Every set of relations satisfies $C(1).$ A set of relations satisfies $C(2)$ if and only if no relation word is a proper subword of any other relation word. For a thorough account of conditions $C(m)$, see Remmers \cite{Remmers71}.

Kambites \cite{Kambites11} proved that in the few relation model, a random semigroup satisfies $C(m)$ with overwhelming probability, for all $m\in \mathbb{N}$. Thinking of the few relation model as a `density zero' model, we prove the following optimal behavior:

\begin{prop}
Let $n\geq 2$ and $m\geq 2$. For every $0 < \delta < \frac{1}{2m}$, $M_{n,\delta,l}$ satisfies $C(m)$ with overwhelming probability.
    \label{prop delta C(m)}
\end{prop}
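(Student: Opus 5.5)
The plan is a first-moment (union bound) computation. I fix a relation word $w$ and a factorization $w=p_1\cdots p_k$ with $k<m$. Each $p_j$ being a piece costs a factor of roughly $(\text{total length of all relation words})\cdot n^{-|p_j|}$. Multiplying over the disjoint segments gives about $(Nl)^k n^{-|w|}$. Summing over the $N$ choices of $w$ and the polynomially many factorizations leaves roughly $N^m\,\mathrm{poly}(l)\,n^{-|w|}$, where $N=\lfloor n^{2\delta l}\rfloor$. This tends to $0$ once $|w|\ge(1-\theta)l$ with $2\delta m<1-\theta$. That explains the threshold $\delta<\frac{1}{2m}$. A cruder bound, that every piece has length $<4\delta l$ (the expected number of common subwords of length $s$ between distinct relation words is about $N^2l^2n^{-s}$), would only give $\delta<\frac{1}{4m}$, so the argument must exploit the product structure.

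\textbf{Step 1.} Fix $\theta>2\delta$ with $2\delta m+\theta<1$; this is possible since $2\delta(m+1)\le 1$ fails only if $\delta\ge \frac{1}{2(m+1)}$, so in general I take $\theta$ just above $2\delta$ and use the slack $1-2\delta m>0$. By Proposition~\ref{finite quotients prop}(1), with overwhelming probability every relation word has length at least $(1-\theta)l$. I work on this event.

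\textbf{Step 2.} I reduce to a statement about the bad event. If $C(m)$ fails, there is a relation word $w$ (a slot $i\in I$ and a side), some $k\le m-1$, and a factorization of $w$ into consecutive segments $w=p_1\cdots p_k$. Each $p_j$ also occurs either in another slot's word, or in the partner word of the same relation, or in $w$ at a different starting position. I union bound over the data: the slot and side ($2N$ choices), the cut points (at most $l^{m}$), and for each $j$ a witness occurrence given by a slot, side and starting position (at most $2Nl$ choices each). For fixed data, I bound the probability that all $k$ coincidences hold by $C\,n^{-|w|}=C\,n^{-\sum|p_j|}$. The total is then at most
\[
C\cdot 2N\cdot l^m\cdot (2Nl)^{m-1}\cdot n^{-(1-\theta)l}\le C' l^{2m} n^{(2\delta m-1+\theta)l}\to 0 .
\]
Strictly, I sum over the length of $w$ too, weighting by the probability that a relation word has that length, as in the proof of Proposition~\ref{finite quotients prop}(2). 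This only helps.

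\textbf{Step 3 (main obstacle).} I must justify the $n^{-\sum|p_j|}$ bound despite dependencies. Several $p_j$ may have witnesses in the same word, or in $w$ itself, possibly overlapping the segments. Also, the pairs are drawn uniformly without replacement, so they are not exactly independent. For the sampling issue I compare with i.i.d.\ uniform pairs. The total variation cost is $O(N^2/p_{n,l})=O(n^{(4\delta-2)l})$, and conditioning on distinctness changes probabilities by a factor $1+o(1)$. Conditional on its length, each relation word is then a uniform word. For the coincidences, I expose letters of $w$ from left to right, say. The $j$th condition $p_j=$ (witness segment) forces each letter of $p_j$ to equal a specified letter. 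When the witness lies in an independent word that letter is fresh. When the witness lies in $w$ at a different position, I use the standard periodicity trick: comparing the earlier of the two occurrences letter by letter, every letter of the later occurrence is determined by an already exposed letter. If witnesses for different $p_j$ land in the same external word, their letters are still each forced at most once, since the $p_j$ are disjoint segments of $w$. So each letter of $w$ contributes an independent factor $1/n$, at least after choosing a consistent exposure order. Setting that ordering up carefully, especially when a witness overlaps its own segment, is the step I expect to need the most care. Everything else is bookkeeping of polynomial factors in $l$.
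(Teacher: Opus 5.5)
Your overall strategy is the same first moment the paper uses: a relation word, a cut into at most $m-1$ segments, and a witness occurrence for each segment. Your treatment of external witnesses is sound: conditioning on all other words forces $w$ letter by letter, even when several witnesses lie in the same external word. The gap is the claim in Step~3 that every fixed choice of data has probability at most $C\,n^{-|w|}$. This is false once two segments are witnessed inside $w$ itself. Take $w=pp$, with $p_1=p$ witnessed at shift $+|p|$ and $p_2=p$ witnessed at shift $-|p|$. Both conditions are the same $|p|$ equalities $w_i=w_{i+|p|}$, so the probability is $n^{-|w|/2}$. In your left-to-right exposure both conditions force the letters of $p_2$, and none forces those of $p_1$. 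The periodicity trick handles one self-overlap at a time, but internal equalities from different segments can close into cycles and become redundant.

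This is not just bookkeeping. For $m\ge 5$ take $w=p\,p\,x_3\cdots x_{m-1}$ with $|p|=a$, where the $x_j$ are single letters witnessed in other relation words. The length weight is about $n^{|w|-l}$ and the constraints cost $n^{-a-(m-3)}$. So your sum contains a term of order $N^{m-2}\,\mathrm{poly}(l)\sum_{a\le l/2}n^{a-l}\approx N^{m-2}\,\mathrm{poly}(l)\,n^{-l/2}$. This diverges for $\frac{1}{4(m-2)}<\delta<\frac{1}{2m}$, yet the probability that some relation word has such a square prefix is only about $N n^{-l/2}\to 0$. The $(2Nl)^{m-3}$ witness choices for the one-letter pieces overcount badly, and the false $n^{-|w|}$ bound is what hid this.

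The missing ingredient is the paper's step~(2). A separate first-moment bound, costing only one factor of $N$, shows that with overwhelming probability no relation word satisfies a self-overlap pattern with at least $\theta l$ independent constraints; that probability is at most $\mathrm{poly}(l)\,N n^{-\theta l}\to 0$ for $\theta>2\delta$. Suppose a configuration has probability $n^{-|w|+c}$. Its internal equalities then have rank at least $c$, because each redundancy comes from a cycle of at least two of them. So on that event only configurations with $c<\theta l$ survive. Those with $c>0$ have a repeated index, hence one fewer factor of $N$, and the choice $\theta<\varepsilon-2\delta(m-2)$ makes their total vanish.

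Step~1 is also impossible as written when $\frac{1}{2(m+1)}\le\delta<\frac{1}{2m}$, since you need both $\theta>2\delta$ and $2\delta m+\theta<1$. The minimum-length bound loses a full factor of $N$. So the length weighting $\Pr(|w|=L)\approx n^{L-l}$ you mention at the end of Step~2 is necessary to reach $\frac{1}{2m}$, not merely helpful. The paper implements it through $a_L\le n^{L-\varepsilon l}$ from Proposition~\ref{finite quotients prop}(2).
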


\begin{proof}

Let $N=\lfloor n^{2\delta l} \rfloor$ and let $M_{n,\delta, l} = \langle x_1,\dots,x_n \mid \mathcal{R}\rangle$ where $\mathcal{R}$ is $u_1=u_{-1},\dots,u_N = u_{-N}$.

Recall $M_{n,\delta, l}$ satisfies $C(m)$ if, for every $1\leq q\leq m-1$ and every partition of a relation word $u = \alpha_1\cdots \alpha_{q}$ into $q$ non-empty subwords, at least one of the $\alpha_k$ is not a piece. If, for some $q<m-1$, $u=\alpha_1\cdots\alpha_q$ is a product of $q$ pieces, this can be expressed as a product of $m-1$ pieces since subwords of pieces are also pieces\footnote{To split a piece $\alpha_k$ into two subword pieces requires that $|\alpha_k| >1$, but this requirement may be ignored as $l\to \infty$}. Thus, it suffices to only consider partitions into exactly $m-1$ non-empty subwords.

In particular, $C(m)$ fails when there exists an $m$-tuple of relation words $u_{i_0}, \dots, u_{i_{m-1}}$, a partition $u_{i_0} = \alpha_1\cdots \alpha_{m-1}$, and positions $p_1, \dots, p_{m-1}$ such that every $k$, $\alpha_k$ occurs in $u_{i_k}$ at position $p_k$, excluding the natural occurrence of $\alpha_k$ in $u_{i_k}$ when $i_k = i_0.$ Here, we use the convention that a subword $\alpha$ occurs in a word $w$ at position $p$ if $w = \beta\alpha\gamma$ and $|\beta| = p$.

Let $S$ be the set of all ordered lists of the form $$(i_0, \dots, i_{m-1}, \alpha_1, \dots, \alpha_{m-1}, p_1, \dots, p_{m-1})$$
such that $i_k\in \{\pm 1, \dots, \pm N\}$ for $0\leq k\leq m-1$; $u_{i_0} = \alpha_1\cdots \alpha_{m-1}$ is a partition into $m-1$ non-empty subwords; and $0\leq p_k \leq |u_{i_k}| - |\alpha_k|$ for every $1\leq k\leq m-1.$ We exclude from $S$ any lists such that there exists $1\leq k\leq m-1$ for which $i_k = i_0$ and $p_k = |\alpha_1\cdots \alpha_{k-1}|$ (i.e., the $p_k$ is the naturally occurring position of $\alpha_k$ in $u_{i_0}$).

For $s\in S$, let $\mathbb{I}_s$ denote the indicator of the event that, for every $1\leq k\leq m-1$, $\alpha_k$ occurs in $u_{i_k}$ in position $p_k$. Let $$X  = \sum_{s\in S}\mathbb{\mathbb{I}}_s.$$ Then, $X\geq 1$ implies that $M_{n,\delta, l}$ fails $C(m).$ By Markov's inequality, $\Pr(X\geq 1) \leq \mathbb{E}(X)$. Thus, it suffices to show $$\mathbb{E}(X) = \sum_{s\in S} \mathbb{E}(\mathbb{I}_s)\xrightarrow{l\to\infty}0.$$

Fix $s = (i_0, \dots, i_{m-1}, \alpha_1, \dots, \alpha_{m-1}, p_1, \dots, p_{m-1})\in S.$ To calculate $\mathbb{E}(\mathbb{I}_s)$, we consider the number of generator constraints imposed by the subwords $\alpha_k$ sequentially. For each $k$, suppose that each $\alpha_j$ occurs in $u_{i_j}$ at position $p_j$ for every $j<k$, and consider the number of new generator constraints imposed by the prescription of $\alpha_k$ in $u_{i_k}$ at position $p_k$. A position of $\alpha_k$ does not impose a new generator constraint if it overlaps with a position of some $\alpha_j$ for $j<k$ or if it overlaps with the natural occurrence of $\alpha_k$ in $u_{i_0}.$ An overlapping position (of $\alpha_k$ with some $\alpha_j$, $j\leq k$) is called compatible if the prescribed generators match. For each $k$, denote the number of overlapping positions by $r_k$. Then, sequentially, $\alpha_k$ imposes $|\alpha_k| - r_k$ new generator constraints.

Note that the new generator constraints from $\alpha_1, \dots, \alpha_{m-1}$ are independent. Thus, letting $R(s) = \sum_{k=1}^{m-1}r_k$, $$\mathbb{E}(\mathbb{I}_s) = \prod_{k=1}^{m-1}n^{-(|\alpha_k|- r_k)} = n^{-|u_{i_0}| + R(s)}$$
provided that all overlapping constraints are compatible and 0 otherwise.

If $R(s) =0$, then $\mathbb{E}(\mathbb{I}_s) = n^{-|u_{i_0}|}.$ This, in particular occurs if $i_0, i_1, \dots, i_{m-1}$ are distinct. On the other hand, if $R(s)>0$, then $\mathbb{I}_s$ occurs with positive probability only if the partition $u_{i_0} = \alpha_1\cdots\alpha_{m-1}$ satisfies certain compatibility conditions (i.e., there must be sufficient overlap between or within the subwords $\alpha_k$).

We will use the following:

\begin{enumerate}
    \item Fix $2\delta(m-1) <\eps < 1 - 2\delta$ and let $a_L$ denote the number of length-$L$ relation words in $\mathcal{R}$. By \ref{finite quotients prop}, with overwhelming probability $$a_L\leq n^{L-\eps l} \,:\, 1\leq L\leq l.$$

    \item Fix $2\delta < \theta< \eps -2\delta(m-2)$. We claim that with overwhelming probability no relation word can satisfy more than $\theta l$ overlapping constraints.

    To see this, fix a relation word $u_i$ and an overlap pattern for a partition $u_i = \alpha_1\cdots \alpha_{m-1}$ in which there are at least $\theta l$ overlapping constraints. Here, the overlap pattern specifies identifications between positions within or between the subwords $\alpha_1, \dots, \alpha_{m-1}$. For a fixed such pattern with $r\geq\theta l$ constraints, the probability that $u_{i_0}$ can satisfy all overlapping constraints is $n^{-r}\leq n^{-\theta l}$.

    For a fixed relation word $u_i$, the number of possible partitions and overlap patterns is polynomial in $l$. Thus by the union bound, the probability that there exists a relation word satisfying an overlap pattern with more than $\theta l$ constraints is at most $$\text{poly}(l)Nn^{-\theta l} = \text{poly}(l)n^{(2\delta -\theta) l} \xrightarrow{l\to \infty}0$$ as $\theta>2\delta.$

\end{enumerate}

Thus, we will calculate $\mathbb{E}(X)$ under the assumptions that the conditions in (1) and (2) hold. In particular, under these assumptions, $$\mathbb{E}(\mathbb{I}_s) \leq \begin{cases}
    n^{-|u_{i_0}|} &\text{if } i_0, \dots, i_{m-1} \text{ pairwise distinct} \\
    n^{-|u_{i_0}| + \theta l} &\text{otherwise}.
\end{cases}$$

Let $1\leq L \leq l$. We now count the number of lists $s\in S$ such that $|u_{i_0}| = L$. There are $a_L$ choices for ${i_0}$. For the remaining $m-1$ indices ${i_1}, \dots, {i_{m-1}}$, there are at most $$(2N-1)(2N-2)\cdots(2N-m+1) \leq (2N)^{m-1}$$ choices such that $i_0, \dots, i_{m-1}$ are distinct; there are at most $${m\choose2}(2N)^{m-2}$$
choices where at least one index in $i_0, \dots, i_{m-1}$ is repeated. Note that these are upper bounds as we exclude lists for which there is at least one value $1\leq k\leq m-1$ such that $i_k = i_0$ and $p_k = |\alpha_1\cdots \alpha_{k-1}|$.

For a given $u_{i_0}$, there are $O(l^{m-2})$ choices for a partition $u_{i_0} = \alpha_1\cdots\alpha_{m-1}$, and $O(l^{m-1})$ choices for the positions $p_1, \dots, p_{m-1}$. Thus, \begin{align*}
    \mathbb{E}(X) &= \sum_{s\in S}\Pr(\mathbb{I}_s = 1) \\
    &=\sum_{\substack{s\in S \\
    \text{distinct $m$-tuple}}}\Pr(\mathbb{I}_s = 1)  + \sum_{\substack{s\in S \\
    \text{non-distinct $m$-tuple}}}\Pr(\mathbb{I}_s = 1) \\
    &\leq \sum_{L=1}^l a_L \text{poly}(l)(2N)^{m-1}n^{-L} + \sum_{L=1}^l a_L\text{poly}(l)(2N)^{m-2}n^{-L+\theta l}\ \\
    (*) &\leq\sum_{L=1}^l a_L \text{poly}(l)n^{2\delta(m-1)l}n^{-L} + \sum_{L=1}^l a_L\text{poly}(l)n^{2\delta(m-2)l}n^{-L+\theta l}.
\end{align*}

Under assumption (1), for all $1\leq L\leq l$, $a_L \leq n^{L-\eps l}$, so
\begin{align*}
    (*) &\leq \sum_{L=1}^l \text{poly}(l)n^{2\delta(m-1)l-L+L-\eps l} + \sum_{L=1}^l \text{poly}(l) n^{2\delta (m-2) l -L + \theta l + L- \eps l} \\
    &= \text{poly}(l)n^{(2\delta(m-1)-\eps )l} + \text{poly}(l)n^{(2\delta(m-2) + \theta -\eps)l} . \\
\end{align*}
Since $2\delta(m-1) < \eps$ and $2\delta(m-2)+\theta<\eps$, this tends to $0$ as $l\to \infty.$

\end{proof}

\subsection{Residual Finiteness}

\begin{defn}
A finitely generated semigroup $M$ is \textbf{graded} if there is a surjective homomorphism with finite fibers from a finitely generated free semigroup onto $M$.
\end{defn}

See \cite{MMS} for an analogous definition for monoids, where the following is proved for monoids $M$ in which $M \setminus \{1\}$ is a semigroup.

\begin{lem}[{\cite[Theorem~1.7]{MMS}}] \label{lem:res fin}
Every graded semigroup is residually finite.
\end{lem}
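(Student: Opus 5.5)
The plan is to separate any two distinct elements of a graded semigroup by a finite Rees quotient, using the length function on the free semigroup. Let $M$ be graded, witnessed by a surjective homomorphism $\varphi\colon \F_n \twoheadrightarrow M$ with finite fibers. Fix distinct $a,b\in M$. The goal is a finite semigroup $Q$ and a homomorphism $M\to Q$ that does not identify $a$ and $b$.

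First, since $\varphi^{-1}(a)$ and $\varphi^{-1}(b)$ are finite, choose $k$ strictly larger than the length of every word in $\varphi^{-1}(a)\cup\varphi^{-1}(b)$. Let $\F_n^{\ge k}$ be the set of words of length at least $k$; it is a two-sided ideal of $\F_n$. Set $J:=\varphi(\F_n^{\ge k})$. Then $J$ is a two-sided ideal of $M$. Indeed, given $m\in M$ and $w\in\F_n^{\ge k}$, surjectivity gives $m=\varphi(w')$ for some word $w'$. Hence $m\varphi(w)=\varphi(w'w)$ and $\varphi(w)m=\varphi(ww')$, and both $w'w$ and $ww'$ have length at least $k$, so both products lie in $J$.

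Next, form the Rees quotient $M/J$. It collapses $J$ to a single zero element and is injective on $M\setminus J$. The complement $M\setminus J$ is contained in $\varphi(B_{n,k-1})$, which is finite, so $M/J$ has at most $|B_{n,k-1}|+1$ elements. Moreover $a\notin J$: if $a=\varphi(w)$ with $|w|\ge k$, then $w\in\varphi^{-1}(a)$ would have length at least $k$, contradicting the choice of $k$. Likewise $b\notin J$. Since the Rees congruence identifies only elements of $J$, the images of $a$ and $b$ in $M/J$ are distinct. This gives the required finite quotient, so $M$ is residually finite.

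There is no real obstacle here. The one point to check carefully is that $J$ is an ideal of $M$ and not merely of $\varphi(\F_n)$; this is exactly where surjectivity of $\varphi$ is used. The finiteness of the fibers is used only to choose $k$. Note that no identity element is needed, so the monoid hypothesis of \cite{MMS} (that $M\setminus\{1\}$ be a semigroup) does not arise in the semigroup setting.
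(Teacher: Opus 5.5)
Your proof is correct and complete. Surjectivity of $\varphi$ is exactly what makes $J=\varphi(\F_n^{\ge k})$ a two-sided ideal of $M$. The inclusion $M\setminus J\subseteq\varphi(B_{n,k-1})$ makes the Rees quotient finite. Finiteness of the fibers $\varphi^{-1}(a)$ and $\varphi^{-1}(b)$ keeps $a$ and $b$ outside $J$. Your choice of $J$ as the image of long words also correctly handles non-homogeneous relations, where $J$ may contain elements that also have short representatives.

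The paper gives no argument of its own. It cites the monoid version \cite[Theorem~1.7]{MMS}. That result transfers to semigroups by adjoining an external identity to $M$: the extended map from the free monoid still has finite fibers, since the fiber of $1$ is just the empty word, and one then restricts back to $M$. Your direct Rees-quotient argument is the standard reasoning behind that result, and it avoids the monoid-to-semigroup translation entirely.
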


Further, Remmers \cite[Corollary~4.14]{Remmers71} proved that $C(3)$ semigroups are graded.

\begin{cor}\label{cor:M-res-fin}
For every $\delta<\frac{1}{6}$ and $n\geq 2$, $M_{n,\delta,l}$ is residually finite with overwhelming probability.
\end{cor}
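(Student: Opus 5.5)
The plan is to chain three results already available. First, since $\delta<\frac{1}{6}=\frac{1}{2\cdot 3}$, Proposition~\ref{prop delta C(m)} with $m=3$ shows that $M_{n,\delta,l}$ satisfies $C(3)$ with overwhelming probability. Second, on this event Remmers' result \cite[Corollary~4.14]{Remmers71} applies: a semigroup given by a $C(3)$ presentation is graded. Concretely, the natural projection $\pi\colon\F_n\twoheadrightarrow M_{n,\delta,l}$ has finite fibers, because in a $C(3)$ presentation every word equal to a given word $w$ has bounded length. Third, Lemma~\ref{lem:res fin} shows that every graded semigroup is residually finite. Together these give
\[
\Pr(M_{n,\delta,l}\ \text{residually finite})\ \geq\ \Pr(M_{n,\delta,l}\ \text{satisfies } C(3))\xrightarrow{l\to\infty}1.
\]

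The one point that needs care is matching hypotheses. Remmers works with semigroup presentations whose relation words are nonempty, and the $C(3)$ notion he uses must agree with the one defined here: a relation word is not a product of fewer than three pieces. Our relations are pairs of distinct nonempty words, so this is a matter of confirming that the definitions coincide. Remmers' notion of a piece is phrased via the list of relation words with multiplicity. Our Proposition~\ref{prop delta C(m)} already treats the list of relation words $u_{\pm1},\dots,u_{\pm N}$ with indices, so repeated words are handled.

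Likewise, Lemma~\ref{lem:res fin} is quoted from \cite{MMS} in the monoid setting, for monoids $M$ with $M\setminus\{1\}$ a semigroup. To transfer it, we adjoin an identity to $M=M_{n,\delta,l}$, forming $M^1$. Then $M^1$ is a graded monoid, since the free monoid maps onto it with finite fibers, and $M^1\setminus\{1\}=M$. Hence $M^1$ is residually finite, and so is its subsemigroup $M$. This bookkeeping is the only potential obstacle; the probabilistic content is entirely contained in Proposition~\ref{prop delta C(m)}.
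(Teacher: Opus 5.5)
Your proposal is correct and matches the paper's proof exactly: Proposition~\ref{prop delta C(m)} with $m=3$ (since $\delta<\frac{1}{6}=\frac{1}{2\cdot 3}$) gives $C(3)$ with overwhelming probability, Remmers' theorem makes $C(3)$ semigroups graded, and Lemma~\ref{lem:res fin} makes graded semigroups residually finite. Your extra bookkeeping is sound and only spells out what the paper leaves implicit: adjoining an identity to reach the monoid setting of Lemma~\ref{lem:res fin}, and passing residual finiteness down to the subsemigroup $M$.
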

\begin{proof}
This follows from Proposition \ref{prop delta C(m)}, Lemma \ref{lem:res fin}, and Remmers' theorem \cite{Remmers71}.
\end{proof}

\medskip

We close this section with the following remark, demonstrating the difference between the behavior of random semigroups and random groups. For random groups with $\d<\frac{1}{2}$, for every given $N$, a random group does not have a subgroup of index $\leq N$, or even a complex representation of dimension $\leq N$, with overwhelming probability (see~\cite{KL19} and~\cite{Chorna}).

\begin{prop}\label{prop:finite-hom-images}
Let $0 < \d < \frac{1}{2}$. For every $N\geq 2$, $M_{n,\d,l}$ has a finite homomorphic image of size $N$, with overwhelming probability.
\end{prop}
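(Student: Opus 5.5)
We need a finite semigroup $T$ of size exactly $N$, generated by $n$ elements, in which every relation $u=v$ of $\mathcal{R}$ holds. By Proposition~\ref{finite quotients prop}(1), fixing $2\delta<\theta<1$, all relation words have length at least $(1-\theta)l$ with overwhelming probability, and this length tends to infinity with $l$. So the plan is to use a finite semigroup in which all sufficiently long words are equal, since such a semigroup satisfies every relation automatically.

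The natural choice is a nilpotent semigroup with zero. Let $T=\{t_1,\dots,t_{N-1},0\}$ with $t_it_j=t_{i+j}$ when $i+j\le N-1$ and $t_it_j=0$ otherwise, and $0$ absorbing. This is the monogenic nilpotent semigroup $\langle t\mid t^{N}=t^{N+1}\rangle$, written with $t^N$ renamed $0$, and it has exactly $N$ elements. Define $\varphi\colon\F_n\to T$ by $x_i\mapsto t_1$ for all $i$. Then $\varphi(w)=t_{|w|}$ if $|w|\le N-1$ and $\varphi(w)=0$ if $|w|\ge N$. The map is surjective because $t_1$ generates $T$.

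For every relation $u=v$ with $|u|,|v|\ge N$ we get $\varphi(u)=0=\varphi(v)$. Hence on the event that all relation words have length at least $(1-\theta)l\ge N$, which holds for $l$ large, $\varphi$ factors through $M_{n,\delta,l}$. This gives a surjection $M_{n,\delta,l}\twoheadrightarrow T$ with $|T|=N$. Since the event has probability tending to $1$, the statement follows.

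I do not expect a real obstacle here. The only points to check are that $T$ has exactly $N$ elements and that $N$ is fixed while $l\to\infty$, so the length bound $(1-\theta)l\ge N$ eventually holds. One could equally use $\langle t\mid t^{N}=t^{N-1+k}\rangle$ for any $k$, but the nilpotent version is the simplest.
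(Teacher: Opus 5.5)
Your proof is correct. It uses the same mechanism as the paper. By Proposition~\ref{finite quotients prop}(1), every relation word eventually exceeds any fixed length, so all relations hold in any nilpotent semigroup in which sufficiently long words vanish. Your proof and the paper's differ only in the choice of target.

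The paper first maps $M_{n,\delta,l}$ onto the truncated free semigroup $\widetilde{M}=\langle x_1,\dots,x_n \mid u=v\ \forall |u|,|v|\ge t+1\rangle$, where $n+\cdots+n^{t-1}<N\le n+\cdots+n^t$. It then cuts the size down by sending $D$ chosen words of length $t$ to zero. You instead identify all generators and use the monogenic nilpotent semigroup $\langle t\mid t^N=t^{N+1}\rangle$. This is a further quotient of the paper's $\widetilde{M}$ with $t=N-1$.

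Your route needs no counting. The paper's route keeps more structure: distinct words of length less than $t$ stay distinct in the image, and for $t\ge 2$ the $n$ generators stay distinct. The statement does not need this.

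The paper's bookkeeping as written also omits the zero element. $\widetilde{M}$ has $n+\cdots+n^t+1$ elements, not $n+\cdots+n^t$, so killing $D=n+\cdots+n^t-N$ words leaves $N+1$ elements. One more word of length $t$ must be killed, which is possible since $N>n+\cdots+n^{t-1}$. Your construction avoids this issue.
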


\begin{proof}
Let $0 < \delta < \frac{1}{2}$ be given. Fix $N\geq 2$. Let $t$ be such that $n+\cdots+n^{t-1} < N \leq n+\cdots+n^t$.

Fix some  $2\delta < \theta < 1$.
By Proposition \ref{finite quotients prop}, $$\Pr\left(\text{All relation words in }\mathcal{R}_{n,\delta,l}\ \text{are of length}\geq (1-\theta)l\right) \xrightarrow{l\rightarrow \infty} 1.$$
We may assume that $(1-\theta)l > t$. In the event that all words in $\mathcal{R}_{n,\delta,l}$ are of length $\geq (1-\theta)l$, the quotient semigroup $M_{n,\delta,l}$ maps onto
\begin{equation} \label{eq:3}
\widetilde{M} := \langle x_1,\dots,x_n\ |\ u=v\ \forall |u|,|v|\geq t+1 \rangle
\end{equation}
which is a semigroup with zero (represented by any word of length $\geq t+1$) whose cardinality is $n+\cdots+n^t$. Let $D:=n+\cdots+n^t-N \geq 0$ and pick $D$ arbitrary length-$t$ words over the alphabet $\{x_1,\dots,x_n\}$, say, $u_1,\dots,u_D$. Consider the quotient of the semigroup of $\widetilde{M}$ modulo the further relations $u_1=\cdots=u_D=0$. Now the non-zero words in this semigroup are precisely: all words of length $\leq t-1$; and all words of length $t$ except for $u_1,\dots,u_D$. Hence the cardinality of this quotient is $n+\dots+n^{t-1}+(n^t-D)=N$.
\end{proof}

\section{Hyperbolicity and non-amenability}\label{sec:amenability}

In this section, we will show that for every $n\ge 2$ and $0<\delta<1/2$, the random semigroup $M_{n,\delta,l}$ has a homomorphic image which generates a non-elementary hyperbolic group, and discuss applications to the (non) amenability of $M_{n,\delta,l}$.

\subsection{Hyperbolicity}
Given a semigroup $M$, let $M\xrightarrow{\iota} G(M)$ be its enveloping group: every homomorphism $f\colon M\rightarrow H$ where $H$ is a group factors uniquely through $\iota$. In particular, for any $n,\delta,l$, we define a random group $G_{n,\delta,l}=G(M_{n,\delta,l})$, namely, $G_{n,\delta,l}\coloneqq F_n/\llangle u_iv_i^{-1}\suchthat i\in I\rrangle$. We will prove that $G_{n,\d,l}$ is non-elementary hyperbolic with overwhelming probability.

Notice that this notion differs from Gromov random groups. Indeed, for any word $w$ in a free group, denote by $\cyc(w)$ its cyclically reduced form. Then the relations of $G_{n,\d,l}$ are sampled independently according to the following measure $\mu_l$: Take a uniform unordered pair $\{u,v\}\subseteq B_{n,l}$, choose an orientation uniformly, and then let $\mu_l$ be the measure of the random word $\cyc(uv^{-1})$.

Our goal will be to use a theorem of Ollivier, which we state below. Throughout the statement, ``a subword under $\mu_l$'' and ``two disjoint subwords under $\mu_l$'' are understood in the sense of Ollivier’s Definitions~6 and~7. In particular, the probability bounds are conditional on the total relator length and on specified portions revealed before the chosen subword. Furthermore, for two subwords, the definition separately covers two independent relators and two disjoint pieces of the same relator.

\begin{thm}[{\cite[Theorem 9]{Ollivier}}]\label{thm:ollivier}
Let $\Gamma$ be a non-elementary hyperbolic group with generating set $\{x_1^{\pm 1},\dots,x_{2n}^{\pm 1}\}$ and trivial virtual center. Let $\mu_l$ be a family of symmetric measures indexed by $l$, satisfying the following axioms:
\begin{enumerate}
    \item There is a constant $\kappa_1\ge 1$ such that $\mu_l$ weights only words of length between $l/\kappa_1$ and $\kappa_1 l$.

    \item There are constants $\kappa_2,\beta_2>0$ such that for every subword $x$ under $\mu_l$, for every $0\le t\le 1$, we have
    \[
    \Pr(\|x\|\le \kappa_2|x|(1-t)) \le e^{o(l)} (2n)^{-\beta_2 t|x|}.
    \]
    Here $\|\cdot\|$ stands for the word metric in the ambient group $\Gamma$.

    \item There are constants $\beta_3,\gamma_3>0$ such that for any two disjoint subwords under $\mu_l$, for any $m=m(l)$,
    \[
      \Pr(\exists \xi,\eta\in F_n\ :\ |\xi|,|\eta|\leq m,\ x\xi y\eta =1) \le e^{o(l)}\cdot (2n)^{\gamma_3 m}(2n)^{-\beta_3(|x|+|y|)}.
    \]

    \item There exist constants $\beta_4,\gamma_4>0$ such that, for any subword $x$ under $\mu_l$, for any $m=m(l)$,
    \[
      \Pr(\exists \xi,\eta\in F_n\setminus\{1\}\ :\ |\xi|,|\eta|\leq m,\ \xi x= x\eta) \le e^{o(l)}\cdot (2n)^{\gamma_4 m}(2n)^{-\beta_4|x|}.
    \]
\end{enumerate}
(We note that the equalities and inequalities of axioms (3) and (4) should hold in $\Gamma$, and that the $e^{o(l)}$ factors should be uniform in the conditioning data.)

Let $R$ be a set of independent random words of density at most $d$ picked under $\mu_l$, i.e., for every length $L$ there are at most $(2n)^{dL}$ (independent) words in~$R$ of length $L$ (each one sampled by $\mu_l^L=\mu_l(\cdot\;\big|\;|\cdot|=L)$).

If $d<\min\{\beta_2,\beta_3,\beta_4\}$, then with probability exponentially close to $1$ as $l\to\infty$, the random quotient $\Gamma/\llangle R\rrangle$ is non-elementary hyperbolic, as well as all intermediate quotients $\Gamma/\llangle R'\rrangle$ with $R'\subseteq R$.
\end{thm}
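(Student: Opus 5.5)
Since Theorem~\ref{thm:ollivier} is quoted from \cite{Ollivier}, the natural proof is Ollivier's generalization of Gromov's density argument. I would run the van Kampen diagram strategy relative to the hyperbolic group $\Gamma$, rather than relative to a free group.

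The first step is to fix a finite presentation of $\Gamma$ with a linear isoperimetric inequality. I then consider diagrams for $\Gamma/\llangle R\rrangle$ whose faces are either $R$-faces or $\Gamma$-faces. I would pass to reduced diagrams, in which no two adjacent $R$-faces are mirror copies of the same relator. The target is the following \emph{local} statement: for some $\varepsilon>0$ and some large constant $K$, with probability exponentially close to $1$, every reduced diagram with at most $K$ $R$-faces satisfies
\[
|\partial D|\ \ge\ \varepsilon \sum_{f\ R\text{-face}} |\partial f|
\]
(plus the $\Gamma$-contribution). A local-to-global principle then turns this into a global linear isoperimetric inequality, hence hyperbolicity. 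The principle I would use is Gromov's Cartan--Hadamard theorem, or Papasoglu's version in which one only checks diagrams of area $\le K l^2$ or so.

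To prove the local statement I would enumerate \emph{abstract diagrams}. These record the number of $R$-faces, which faces are copies of the same relator, and how boundaries are glued, where gluings are allowed up to $\Gamma$-paths of length $\le m$. For bounded $K$ the number of such combinatorial types is $e^{o(l)}$, polynomial in $l$ with exponent depending only on $K$. For a fixed type I would bound the probability that it is \emph{fulfilled} by the random relators. I would reveal the relators one subword at a time, which is exactly the conditioning built into Ollivier's Definitions~6 and~7, and bound each gluing event using the axioms:
\begin{itemize}
\item axiom (3) handles a shared piece between two relators, or between two disjoint parts of one relator;
\item axiom (4) handles a relator glued to a cyclic shift of itself, which would otherwise produce ``proper-power'' type cancellation;
\item axiom (2) makes the relators quasi-geodesic in $\Gamma$, so boundary paths not glued to other $R$-faces genuinely contribute length.
\end{itemize}
Multiplying these bounds gives a fulfilment probability of order $(2n)^{-\beta\cdot(\text{glued length})}$ with $\beta=\min\{\beta_2,\beta_3,\beta_4\}$. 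The density hypothesis allows $(2n)^{d\cdot(\text{length})}$ choices of relators per face. Since $d<\beta$, a diagram violating the inequality has glued length near the total length, and the expected number of fulfilled bad diagrams is $e^{o(l)}(2n)^{-(\beta-d)cl}$, which tends to $0$. Axiom (1) keeps all lengths comparable to $l$, so the $e^{o(l)}$ errors and the choice $m=m(l)$ are absorbed.

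Non-elementarity I would derive separately. It is enough to show the quotient is not virtually cyclic, for instance by showing that the isoperimetric constant forces words of length $\ll l$ that are geodesic in $\Gamma$ to remain undistorted. Then two non-commuting loxodromics of $\Gamma$, which exist since $\Gamma$ is non-elementary with trivial virtual center, survive as non-commensurable elements. The statement for intermediate quotients $R'\subseteq R$ is automatic, since a reduced diagram over $R'$ is a diagram over $R$.

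I expect the main obstacle to be the probability estimate for a fixed abstract diagram. Relators appear several times in one diagram and pieces can overlap with $\Gamma$-detours, so the events are not independent. The fix I would try first is to choose a revelation order along the diagram: at each step the next glued subword is disjoint from what has already been revealed, which is exactly where the conditional, uniform-in-data form of axioms (3)--(4) is needed. Only after this ordering would I sum over the $e^{o(l)}$ combinatorial types.
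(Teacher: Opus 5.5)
The paper gives no proof of this statement: it imports it from \cite{Ollivier} and only checks the axioms for $\widetilde{\mu}_l$. Your outline therefore has to stand on its own. Its architecture (a local-to-global principle plus a first-moment count over abstract diagrams) is the right one and is Ollivier's. However, the central probabilistic step fails as written.

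\emph{The gap.} You bound the fulfilment probability of an abstract diagram by $(2n)^{-\beta\cdot(\text{glued length})}$ and pay $(2n)^{d\cdot(\text{length})}$ per face. This is false as soon as one relator labels several faces, and such diagrams cannot be excluded. For instance, take $r=pAp^{-1}B$ with $|p|$ close to $|r|/2$. Glue $k$ copies of $r$ in a chain, the $p$ of each copy against the $p^{-1}$ of the next. This is a reduced disc diagram with $|\partial D|=k|r|-2(k-1)|p|$, so $|\partial D|/\sum_f|\partial f|$ can be made as small as you like. Yet its fulfilment probability is just the probability that $r$ has this shape, which does not depend on $k$, whereas your bound decays exponentially in $k$. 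No revelation order repairs this: once $r$ is revealed, every other copy is determined, so no fresh subword is left to reveal.

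\emph{The missing idea} is Ollivier's multiplicity bookkeeping.
Pay $(2n)^{d|r_i|}$ per \emph{distinct} relator, ordered so that the multiplicities satisfy $m_1\ge m_2\ge\cdots$.
Assign each glued edge once, to the later relator; a gluing of $r_i$ with itself goes to the copy where it sits at the larger position.
Let $\delta_i$ be the length assigned to $r_i$. Some copy of $r_i$ carries at least $\delta_i/m_i$ of it, at distinct positions, each determined by letters revealed earlier. Only that copy is used to bound the probability.
Normalize so that a glued piece of length $\ell$ costs $(2n)^{-2\beta\ell}$, as in axiom~(3). The first moment then gives, with overwhelming probability, $\sum_{j\le i}\delta_j/m_j\le(\frac{d}{2\beta}+\varepsilon)\sum_{j\le i}|r_j|$ for every $i$.
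Abel summation, using $m_j-m_{j+1}\ge 0$, transfers this to $\sum_i\delta_i\le(\frac{d}{2\beta}+\varepsilon)\sum_f|\partial f|$. That is, $|\partial D|\ge(1-\frac{d}{\beta}-2\varepsilon)\sum_f|\partial f|$, up to the $\Gamma$-slack.
This, not a per-face product, is what works for every $d<\beta$.

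\emph{Non-elementarity.} Your argument here also does not go through. Undistortion of $\Gamma$-geodesic words of length $\ll l$ is information below the injectivity radius. The local-to-global principle, by contrast, only gives a hyperbolicity constant $\delta'$ of order $l$ with a large constant. This is compatible with a finite quotient: any finite group of diameter at most $\delta'$ is $\delta'$-hyperbolic, and nothing you know prevents such a group from agreeing with $\Gamma$ on balls of radius $\ll l$. It is equally compatible with a virtually cyclic quotient. Nor does it show that the images of two loxodromics of $\Gamma$ have infinite order or no common powers, since their powers of length $\gtrsim l$ are not controlled at all. A separate, genuinely global argument is needed there.

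Less seriously, two steps are asserted rather than carried out. One is the passage from mixed diagrams, with unboundedly many $\Gamma$-faces, to abstract diagrams glued along $\Gamma$-paths of length $\le m$. The other is the claim that the resulting $(2n)^{\gamma m}$ factors are affordable.
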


We want to apply Theorem~\ref{thm:ollivier} to the random group $G_{n,\delta,l}$. Here the ambient group is $\Gamma=F_n$, which is non-elementary hyperbolic and has trivial virtual center for $n\ge 2$. The measures $\mu_l$ are those defined above, which are symmetric by definition. However, these measures do not satisfy Axiom 1: they give a positive weight to short words. To overcome this, we will replace the measures $\mu_l$ by a truncated version of them. We fix $0\le\delta<\frac{1}{2}$ and choose $\delta<\theta<\frac{1}{2}$.

We write $f\ll_n g$ if there is a constant $C>0$, depending only on $n$, such that $f\le Cg$.

\begin{lem} \label{lem:cyc}
For $u,v$ chosen uniformly as above,
\[
\Pr(|\cyc(uv^{-1})| = L) \ll_n (l-\lceil L/2\rceil +1)n^{\lfloor L/2\rfloor -l}
\]
uniformly for $1\le L\le 2l$.
\end{lem}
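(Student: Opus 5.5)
The plan is to describe $\cyc(uv^{-1})$ explicitly in terms of $u$ and $v$, and then count the pairs $(u,v)$ that produce a given length $L$. Since $u,v$ are positive words, all cancellation in $uv^{-1}$ happens at the two junctions. Free reduction removes the longest common suffix $s$ of $u$ and $v$. Writing $u=u's$ and $v=v's$, we get the reduced word $u'v'^{-1}$.

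Next consider cyclic reduction. The first letter of $u'$ is a positive generator, and the last letter of $v'^{-1}$ is the inverse of the first letter of $v'$. So cyclic reduction removes exactly the longest common prefix $p$ of $u'$ and $v'$. If one of $u',v'$ is empty, the word is already cyclically reduced and $p$ is empty, so this case fits the same description. Hence $u=pu''s$, $v=pv''s$ and $\cyc(uv^{-1})=u''v''^{-1}$, which is reduced of length $|u''|+|v''|$. The orientation chosen in the definition of $\mu_l$ does not affect the length.

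Now fix $L$ and count. Set $i=|u''|$, $j=|v''|$ with $i+j=L$, and $c=|p|+|s|$. The constraints $|u|,|v|\le l$ force $c\le l-\max(i,j)$. The triple $(p,u''v''^{-1},s)$ is determined by $(u,v)$, so I will overcount by ignoring maximality of $p,s$. For fixed $i,j,c$ there are at most $c+1$ ways to split $c=|p|+|s|$. Each split gives at most $n^{c}\cdot n^{i}\cdot n^{j}=n^{c+L}$ choices of the words $p,s,u'',v''$.

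The number of unordered pairs is $p_{n,l}\asymp_n n^{2l}$, and passing between ordered and unordered pairs costs only a factor $2$. Therefore
\[
\Pr(|\cyc(uv^{-1})|=L)\ll_n n^{-2l}\sum_{i+j=L}\ \sum_{c=0}^{l-\max(i,j)}(c+1)n^{c+L}.
\]
The inner sum is geometric, so it is $\ll_n (l-\max(i,j)+1)\,n^{l-\max(i,j)+L}$. Using $L-\max(i,j)=\min(i,j)$, each term is $\ll_n (l-\max(i,j)+1)\,n^{\min(i,j)-l}$.

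Finally sum over $i$ from $0$ to $L$. Since $\max(i,j)\ge\lceil L/2\rceil$, the prefactor is at most $l-\lceil L/2\rceil+1$. The values $n^{\min(i,j)}$ are at most two copies of a geometric series in $\min(i,j)\le\lfloor L/2\rfloor$. That series is dominated by its largest term, giving $\ll_n n^{\lfloor L/2\rfloor}$ with a constant depending only on $n$ since $n\ge2$. This yields the claimed bound uniformly in $1\le L\le 2l$.

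The main point to get right is the structural description of the cyclic reduction, namely that only the common suffix and then the common prefix cancel. The rest is a routine geometric-sum estimate.
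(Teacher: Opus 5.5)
Your proposal is correct and follows the paper's proof essentially step for step. The paper also writes $u=wpw'$, $v=wqw'$ with $\cyc(uv^{-1})=pq^{-1}$, and overcounts quadruples $(w,w',p,q)$ subject to $|w|+|w'|+\max\{|p|,|q|\}\le l$ using $\sum_{a+a'\le M}n^{a+a'}=\sum_{j=0}^M(j+1)n^j\asymp_n(M+1)n^M$. It then bounds the sum over the split $|p|+|q|=L$ exactly as you do, and your explicit check that free reduction strips the common suffix and cyclic reduction then strips the common prefix is a detail the paper only asserts.
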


\begin{proof}
To count the number of pairs $(u,v)$ with $|\cyc(uv^{-1})|=L$, we write $u=wpw'$ and $v=wqw'$ where $p,q$ do not begin with the same letter and do not end with the same letter. Then $|w|+|w'|+\max\{|p|,|q|\}\le l$, and $\cyc(uv^{-1})=pq^{-1}$, so $|\cyc(uv^{-1})|=|p|+|q|$. Therefore, the number of pairs $(u,v)$ such that $|\cyc(uv^{-1})|=L$ is bounded by the number of words $(w,w',p,q)$ of lengths $(a,a',i,L-i)$ such that $a+a'+\max\{i,L-i\}\le l$:
\begin{align*}
    \Pr(|\cyc(uv^{-1})|=L) &\le \frac{1}{|B_{n,l}|(|B_{n,l}|-1)}\sum_{i=\max\{0,L-l\}}^{\min\{L,l\}}\sum_{\substack{a+a'\le \\ l-\max\{i,L-i\}}}n^{a+a'+i+(L-i)} \\
    &\ll_n \frac{n^L}{n^{2l}}\sum_{i=\max\{0,L-l\}}^{\min\{L,l\}}\sum_{\substack{a+a'\le \\ l-\max\{i,L-i\}}}n^{a+a'}.
\end{align*}
We use the identity
\[
\sum_{a+a'\le M}n^{a+a'} = \sum_{j=0}^M (j+1)n^j \asymp_n (M+1)n^M
\]
and deduce
\begin{align*}
    \Pr(|\cyc(uv^{-1})|=L) &\ll_n \frac{n^L}{n^{2l}}\sum_{i=\max\{0,L-l\}}^{\min\{L,l\}}(l-\max\{i,L-i\}+1)n^{l-\max\{i,L-i\}} \\
    &\le \frac{l-\lceil L/2\rceil+1}{n^l}\sum_{i=\max\{0,L-l\}}^{\min\{L,l\}} n^{\min\{i,L-i\}} \\
    &\ll_n (l-\lceil L/2\rceil+1)n^{\lfloor L/2\rfloor -l}.
\end{align*}
\end{proof}

A simple union bound shows that:

\begin{cor}\label{cor:cyc-length-leL}
For any $L\ge 1$,
\[
\Pr(|\cyc(uv^{-1})| \le L) \ll_n l\,n^{L/2 -l}.
\]
\end{cor}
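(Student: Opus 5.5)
The plan is to sum the bound of Lemma~\ref{lem:cyc} over all lengths $1\le L'\le L$; this is the ``simple union bound'' the text refers to. We have
\[
\Pr(|\cyc(uv^{-1})|\le L)=\sum_{L'=1}^{\min\{L,2l\}}\Pr(|\cyc(uv^{-1})|=L'),
\]
and each summand is $\ll_n (l-\lceil L'/2\rceil+1)\,n^{\lfloor L'/2\rfloor-l}$, uniformly in $L'$. The event $|\cyc(uv^{-1})|=0$ cannot occur, since $u\neq v$ forces $uv^{-1}\neq 1$ in the free group.

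Next we bound the prefactor crudely. For every $L'\ge 1$ we have $l-\lceil L'/2\rceil+1\le l$, so each term is at most $l\,n^{\lfloor L'/2\rfloor-l}$. It remains to estimate the geometric-type sum $\sum_{L'=1}^{L} n^{\lfloor L'/2\rfloor}$. Each value $j=\lfloor L'/2\rfloor$ arises from at most two values of $L'$, and $j\le \lfloor L/2\rfloor$. Hence
\[
\sum_{L'=1}^{L} n^{\lfloor L'/2\rfloor}\le 2\sum_{j=0}^{\lfloor L/2\rfloor} n^j\le \frac{2n}{n-1}\,n^{\lfloor L/2\rfloor}\le 4\,n^{L/2},
\]
using $n\ge 2$. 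Combining the two estimates gives $\Pr(|\cyc(uv^{-1})|\le L)\ll_n l\,n^{L/2-l}$, with the implied constant depending only on $n$.

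We also address two side cases. If $L>2l$, the right-hand side is at least $l\,n^{0}\ge 1$ times a constant, so the bound is trivial. The only point needing care is that the constant from Lemma~\ref{lem:cyc} is uniform in $L'$, which the lemma provides. No step here is a genuine obstacle; the only thing to watch is that the geometric sum is dominated by its last term, which holds because $n\ge 2$.
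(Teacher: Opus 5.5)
Your proposal is correct and is the paper's argument: the paper obtains this corollary by the ``simple union bound'' of summing Lemma~\ref{lem:cyc} over $1\le L'\le \min\{L,2l\}$, bounding the prefactor by $l$ and the geometric sum $\sum n^{\lfloor L'/2\rfloor}$ by $O_n(n^{L/2})$. Your explicit treatment of the $L'=0$ and $L>2l$ edge cases fills in details the paper leaves implicit.
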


By taking another union bound, we get:

\begin{cor}\label{cor:exists-short-rel}
If $u_1=v_1,\dots,u_N=v_N$ are the relations of the random semigroup for $N=\lfloor n^{2\delta l}\rfloor$, then
\[
\Pr\left(\exists i:|\cyc(u_iv_i^{-1})|\le 2(1-2\theta)l\right) \ll_n l\, n^{2(\delta-\theta)l}.
\]
\end{cor}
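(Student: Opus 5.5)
The plan is a union bound over the $N=\lfloor n^{2\delta l}\rfloor$ relations, using Corollary~\ref{cor:cyc-length-leL} for each one. The $i$-th relation $\{u_i,v_i\}$ is a uniformly random unordered pair of distinct words in $B_{n,l}$; the orientation only changes $\cyc(u_iv_i^{-1})$ by inversion, which preserves length. So the marginal law of $|\cyc(u_iv_i^{-1})|$ is exactly the law estimated in Lemma~\ref{lem:cyc}. This holds even though the $N$ relations are drawn without replacement, because each individual pair is still uniform among all pairs. No independence between relations is needed, since we only use the union bound.

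The steps are as follows.

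\begin{enumerate}
\item Apply Corollary~\ref{cor:cyc-length-leL} with $L=2(1-2\theta)l$. This is legitimate as long as the implied constant is uniform in $L$, which it is: the corollary is obtained by summing the bound of Lemma~\ref{lem:cyc} over $L'\le L$, and that sum is a geometric series in $n^{1/2}$. It gives
\[
\Pr\bigl(|\cyc(u_iv_i^{-1})|\le 2(1-2\theta)l\bigr)\ll_n l\,n^{(1-2\theta)l-l}=l\,n^{-2\theta l}.
\]
\item Sum over $i\in I$:
\[
\Pr\bigl(\exists i:\ |\cyc(u_iv_i^{-1})|\le 2(1-2\theta)l\bigr)\le N\cdot C_n\, l\, n^{-2\theta l}\le C_n\, l\, n^{2(\delta-\theta)l}.
\]
\end{enumerate}

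There is no real obstacle here. The only point needing care is the rounding: if $L$ is not an integer, we replace it by $\lfloor L\rfloor$, which only helps since $n^{\lfloor L\rfloor/2}\le n^{L/2}$.
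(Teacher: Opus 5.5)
Your proposal is correct and is the paper's own argument: the paper obtains this corollary by a union bound over the $N\le n^{2\delta l}$ relations, applying Corollary~\ref{cor:cyc-length-leL} with $L=2(1-2\theta)l$ to get $N\cdot l\,n^{-2\theta l}\ll_n l\,n^{2(\delta-\theta)l}$. Your added remarks (each pair is marginally uniform, orientation does not affect the length, the constant is uniform in $L$, and the rounding) are valid and only make explicit what the paper leaves implicit.
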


We write $\widetilde{\mu}_l$ for the measure obtained from $\mu_l$ by conditioning on the fact that the length of a $\widetilde{\mu}_l$-random word is at least $cl$ for $c=2(1-2\theta)>0$. In other words, sampling according to $\widetilde{\mu}_l$ can be thought of as sampling according to $\mu_l$ until the resulting word is long enough.

We note that for any $cl\le L\le 2l$,
\[
\mu_l(\;\cdot\;\big|\;|\cdot|=L) = \widetilde{\mu}_l(\;\cdot\;\big|\;|\cdot|=L).
\]
Therefore, the verification of Axioms 3 and 4 for the truncated law $\widetilde{\mu}_l$ uses exactly the same length-conditioned distributions as $\mu_l$.

\begin{lem}\label{lem:conditional-uniform}
There is a constant $C=C_n$ such that the following holds. Let $r$ be a $\mu_l$-random word. Condition on $|r|=L$, fix a deterministic cyclic interval\footnote{Namely, a subset of the form $[i,j]$ or $[1,i]\cup[j,L]$ for $1\leq i\leq j\leq L$.} $I\subseteq\Z/L\Z$, and reveal all the letters outside $I$. Then, for every word $z$ whose letters are indexed by $I$,
\begin{equation}\label{eq:conditioned-mul}
  \Pr\left(r|_I=z\,\Big|\,|r|=L,\ r|_{I^c}\right)
\leq Cn^{-|I|}.
\end{equation}
\end{lem}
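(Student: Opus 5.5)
The plan is to count preimages. Conditioning on the pair $(u,v)$ is the wrong viewpoint. We instead parametrize the cyclic word $r=\cyc(uv^{-1})$ of length $L$ by the decomposition used in the proof of Lemma~\ref{lem:cyc}: $u=wpw'$, $v=wqw'$, $r=pq^{-1}$ (up to orientation and cyclic rotation, which contribute only a factor $\le 2\cdot 2L$ that is uniform in the data, or are absorbed by fixing the base point of $r$ as given by $p$ starting at position $1$). Given $r$ with $|r|=L$ and a split index $i=|p|$, the pair $(u,v)$ is determined by $r$, $i$, and the free choice of $(w,w')$ with $|w|+|w'|\le l-\max\{i,L-i\}$, subject to the non-cancellation constraints on first and last letters. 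Thus the probability of a specific cyclic word $z_0$ of length $L$ is
\[
\Pr(r=z_0)=\frac{1}{|B_{n,l}|(|B_{n,l}|-1)}\sum_{i} c_i(z_0)\, W\bigl(l-\max\{i,L-i\}\bigr),
\]
where $W(M)=\#\{(w,w')\}$ with $|w|+|w'|\le M$ (essentially $\asymp_n (M+1)n^M$, up to constant factors from the end-letter constraints), and $c_i(z_0)\in\{0,1\}$ records whether $z_0$ splits at $i$ into a positive part of length $i$ followed by an inverse part. So the measure on length-$L$ words is a mixture over the split point $i$. Conditioned on $i$ (and $|r|=L$), the letters of $p$ and $q$ are uniform among words compatible with the boundary constraints (first letters of $p,q$ differ, last letters differ).

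Step one is to reduce to a fixed split $i$. Since the bound \eqref{eq:conditioned-mul} is required uniformly, it suffices to prove it conditionally on $i$ as well. The reason is that the split point is encoded by the letter signs: positive letters for $p$, inverse letters for $q^{-1}$. The revealed part $r|_{I^c}$ together with the event $r|_I=z$ determines $i$. More precisely, for each $z$ the event $\{r|_I=z\}$ forces a sign pattern, hence at most one admissible $i$ consistent with the full word. We then write
\[
\Pr(r|_I=z\mid L, r|_{I^c})=\frac{\Pr(r|_I=z,\ r|_{I^c})}{\sum_{z'}\Pr(r|_I=z',\ r|_{I^c})}.
\]

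Step two, the main obstacle, is that the numerator carries weight $W(l-\max\{i,L-i\})$, which can be exponentially larger for balanced splits than for unbalanced ones. So different $z'$ carry wildly different weights, and the uniform bound is not automatic. I would handle this as follows. The numerator is at most $\max_i W_i$ over splits $i$ consistent with the revealed signs. For the denominator, we exhibit at least $c_n n^{|I|}$ completions $z'$ realizing the same optimal split $i^*$: keep the sign pattern of the optimal split on $I$, and choose letters freely except near the at most four boundary positions, where the no-cancellation constraints remove at most a factor $(1-1/n)^{-4}$. The optimal split must be compatible with $r|_{I^c}$. Since $I$ is a cyclic interval, the signs outside $I$ consist of one or two monotone blocks. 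Hence any split point $i$ consistent with some completion can be realized by a completion whose sign pattern on $I$ is a single sign change at $i$, placed anywhere inside $I$ if the change lies in $I$. This gives
\[
\text{numerator}\le W_{i^*}\cdot n^{-0}, \qquad \text{denominator}\ge c_n n^{|I|}W_{i^*},
\]
which yields $C n^{-|I|}$.

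Step three is bookkeeping. We check that the cyclic rotation and orientation choices only multiply numerator and denominator by comparable factors, and that the end-letter constraints cost at most bounded factors depending on $n$. This gives the constant $C=C_n$.
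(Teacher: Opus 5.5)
Your proposal is correct and uses the same mechanism as the paper's proof: writing $u=wpw'$, $v=wqw'$, the number of choices of $(w,w')$ depends only on the split $(|p|,|q|)$, so conditionally on the split and on $r|_{I^c}$ the filling of $I$ is uniform over a nonempty admissible set of size at least $n^{|I|-4}$. The paper simply conditions on the split and averages the resulting bounds; once you actually carry out the conditioning set up in your Step one, the ``main obstacle'' of Step two disappears, and your comparison with the heaviest consistent split $i^*$ is a valid but unnecessary detour. Also, $\cyc(uv^{-1})=pq^{-1}$ comes with a canonical base point, so no rotation factor arises --- which matters, because a factor of order $L$ would not be compatible with a constant $C_n$.
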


\begin{proof}
Write $r=\cyc(uv^{-1})$ for $u,v$ chosen uniformly from $B_{n,l}$. Write $u=wpw'$ and $v=wqw'$, where $w$ is the maximal common prefix of $u,v$, and $w'$ is the maximal common suffix of $u,v$ after removing $w$. Then $r=pq^{-1}$. Put $a=|p|$ and $b=|q|$. Conditional on $a,b$, all pairs $(p,q)$ with lengths $(a,b)$ such that $p,q$ do not start or end with the same letter, have the same probability: indeed, the number of possible choices of $w,w'$ is $\sum_{h+k\leq l-\max\{a,b\}}n^{h+k}$, which depends only on $a,b$. The cases $a=0$ or $b=0$ are the same, with fewer restrictions.

Take a cyclic interval $I\subseteq\Z/L\Z$ and a word $z$. Fix $a,b$ such that $a+b=L$, and fix a realization $\rho$ of $r|_{I^c}$ for which the conditioning event $|p|=a,|q|=b,r|_{I^c}=\rho$ has positive probability. Let $\mathcal{F}_{a,b}(\rho)$ be the set of all admissible fillings of the letters in $I$. Conditional on $a,b$ and on $r|_{I^c}=\rho$, all elements of $\mathcal{F}_{a,b}(\rho)$ have the same probability.

When $a,b>0$, the only restrictions on $p,q$ are $p_1\ne q_1$ and $p_a\ne q_b$. Since $\mathcal{F}_{a,b}(\rho)$ is nonempty, fix one admissible assignment to the at most four positions involved in the endpoint restrictions. All remaining positions in $I$ may be chosen freely, hence $|\mathcal{F}_{a,b}(\rho)|\ge n^{|I|-4}$. Therefore
\[
\Pr\left(r|_I=z \;\big|\; |p|=a,|q|=b,r|_{I^c}=\rho\right) \le n^4n^{-|I|}.
\]
The same bound holds also if $a=0$ or $b=0$. Since the right hand side does not depend on $a,b$, we may average over them and deduce
\[
\Pr\left(r|_I=z \;\big|\; |r|=L,r|_{I^c}=\rho\right) \le n^4n^{-|I|},
\]
so the lemma holds with $C=n^4$.
\end{proof}

The lemma conditions on the whole complement of the chosen interval, and is therefore stronger than each of the conditional estimates required in Ollivier’s theorem. Since the bound is uniform in the revealed letters, averaging over any information not prescribed by those definitions gives the required estimates.

\begin{prop}\label{prop:ollivier-axioms}
The measures $\widetilde{\mu}_l$ satisfy Ollivier's axioms with $\beta_2$ arbitrarily large, $\beta_3=\frac{1}{2}\log_{2n}n$, and $\beta_4=\log_{2n}n$.
\end{prop}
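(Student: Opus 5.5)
We verify the four axioms of Theorem~\ref{thm:ollivier} for $\widetilde{\mu}_l$ one at a time, with $\Gamma=F_n$. Throughout, the only input about the law is Lemma~\ref{lem:conditional-uniform}: conditional on the length $L$ and on all letters outside a cyclic interval $I$, each filling of $I$ has probability at most $Cn^{-|I|}=C(2n)^{-|I|\log_{2n}n}$. Since $C$ is a constant it is absorbed into $e^{o(l)}$. Averaging over information not prescribed by Ollivier's Definitions~6 and~7 preserves the bound.

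\textbf{Axiom 1.} By construction $\widetilde{\mu}_l$ is supported on cyclically reduced words of length between $cl$ and $2l$, where $c=2(1-2\theta)>0$. Hence $\kappa_1=\max\{2,1/c\}$ works.

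\textbf{Axiom 2.} Every $\widetilde{\mu}_l$-word is cyclically reduced, so every subword $x$ is freely reduced. In $F_n$ the word metric of a reduced word equals its length, so $\|x\|=|x|$. Take $\kappa_2<1$, say $\kappa_2=1/2$. Then $\|x\|\le\kappa_2|x|(1-t)<|x|$ never happens, and the probability is $0$ for every $t$. This is why $\beta_2$ may be taken arbitrarily large.

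\textbf{Axiom 4.} Suppose $\xi x=x\eta$ with $\xi,\eta\neq1$ and $|\xi|\le m$. Since $x$ is reduced, a standard free group argument (conjugacy of $\xi$ and $\eta$ by $x$, via cancellation) shows one of two things. Either $x$ is a prefix of a power $\xi'^{k}$ of a cyclic-word of length $\le m$, or $x$ has a self-overlap with period at most $m$. In both cases, once the first $p\le m$ letters of $x$ are fixed (at most $(2n)^m$ choices, or $m(2n)^m$ counting the period), all of $x$ is determined. Apply Lemma~\ref{lem:conditional-uniform} to the interval consisting of the remaining $|x|-p$ positions, and take a union bound over the determined prefix. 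This gives a bound of order $(2n)^{2m}\,n^{-|x|}$ up to polynomial factors. Thus Axiom~4 holds with $\gamma_4=2$ and $\beta_4=\log_{2n}n$. The care needed is that the revealed letters from outside $x$ are conditioned on, and the lemma is uniform in them.

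\textbf{Axiom 3, the main obstacle.} We need $x\xi y\eta=1$ with $|\xi|,|\eta|\le m$, which says that $x$ and $y^{-1}$ nearly coincide. Concretely, after free cancellation, $x$ can be written as $x=x_1x_2$ and $y=y_1y_2$, and then $x_2\xi y_1$ and $y_2\eta x_1$ are both trivial. Up to $O(m)$ letters eaten by $\xi,\eta$, a portion of $x$ of total length at least $(|x|+|y|)/2-2m$ is determined by $y$ (or conversely). Enumerate the cut points (polynomially many choices) and the words $\xi,\eta$ (at most $(2n)^{2m}$ choices). In the independent-relator case, condition on $y$ entirely and apply Lemma~\ref{lem:conditional-uniform} to $x$ on the determined portion. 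In the disjoint-pieces case, reveal $y$ together with everything outside $x$, then apply the lemma to $x$. Either way the probability is at most $e^{o(l)}(2n)^{\gamma_3 m}n^{-(|x|+|y|)/2}$ for a suitable $\gamma_3$ (say $4$). This yields $\beta_3=\tfrac12\log_{2n}n$.

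The delicate point is the bookkeeping: the determined portion of $x$ must lie in a region that is genuinely unrevealed. Because the lemma conditions on all of $I^c$, this reduces to choosing $I$ to be the relevant part of $x$, which is a cyclic interval.
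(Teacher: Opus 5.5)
Your route is the paper's: you reduce each axiom to Lemma~\ref{lem:conditional-uniform}, and your treatments of Axioms 1--3 are fine. Your $\kappa_2=1/2$ in Axiom~2 works as well as the paper's $\kappa_2=1$. In Axiom~3 the cut-point decomposition is unnecessary. The paper simply notes that $y=\xi^{-1}x^{-1}\eta^{-1}$ is determined by $x,\xi,\eta$, applies the lemma to all of $y$, and uses $\bigl||x|-|y|\bigr|\le 2m$. Your splitting $x=x_1x_2$, $y=y_1y_2$ exists only when $|x|+|y|\ge|\xi|+|\eta|$, but in the complementary case the bound is trivial.

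The gap is in Axiom~4. Your structural claim is false: a solution $x$ of $\xi x=x\eta$ need not be periodic with period at most $m$, and it is not determined by its first $p\le m$ letters. Let $a\ne b$ be generators, $m\ge 3$, $\xi=a$, $\eta=bab^{-1}$ and $x=a^{s-1}b^{-1}$ with $s>m$. Then $\xi x=a^{s}b^{-1}=x\eta$, yet $x$ has no period $\le m$. It also shares its first $m$ letters with the solution $a^{s}$ of $ax=xa$, and with $a^{s-1}c$ (any letter $c\ne a^{-1}$), which solves $ax=x(c^{-1}ac)$.

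In general, write $\xi=h\xi'h^{-1}$ and $\eta=g\eta'g^{-1}$ with $\xi',\eta'$ cyclically reduced. Then $y=h^{-1}xg$ solves $\xi'y=y\eta'$ and is essentially periodic. However, $x$ itself is the reduced form of $hyg^{-1}$, so it carries an unconstrained prefix and suffix of length $O(m)$ coming from the conjugators. Your union bound over prefixes and periods never sees the suffix, so it undercounts the candidate words by a factor $(2n)^{O(m)}$.

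The conclusion $\beta_4=\log_{2n}n$ survives, since only the coefficient of $m$ changes, but the count must be done correctly. The paper fixes $(\xi,\eta)$ first. The solution set is then the coset $x_0\operatorname{Cent}_{F_n}(\eta)$ of a cyclic group with $|x_0|\le 2m$, so there are at most $4m+2s+1$ solutions of length $s$. Applying the lemma to the whole interval of $x$ and summing over the $O((2n)^{2m})$ pairs $(\xi,\eta)$ gives $e^{o(l)}(2n)^{3m}n^{-s}$. Alternatively, you could union-bound over a prefix, a suffix and a period word, each of length $O(m)$, and apply the lemma to the remaining middle interval.
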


\begin{proof}
We do this directly by verifying each axiom:
\begin{enumerate}
    \item This is immediate by the definition of $\widetilde{\mu}_l$; indeed, we can take $\kappa_1 = \max\{2,c^{-1}\}$, since any word sampled by $\widetilde{\mu}_l$ has length between $cl$ and $2l$.

    \item In Ollivier's setting, the ambient group $\Gamma$ is allowed to be a non-elementary hyperbolic (not necessarily free) group. However, in our case $\Gamma$ is free. Since the words sampled by $\widetilde{\mu}_l$ are cyclically reduced, any subword $x$ of them is freely reduced, and so satisfies $\|x\|=|x|$. This proves Axiom 2 with $\kappa_2=1$ and arbitrarily large $\beta_2$.

    \item Let $x,y$ be two disjoint subwords and let $m=m(l)$. Fix words $\xi,\eta$ of lengths at most $m$. After revealing $x$, the equation $x\xi y\eta=e$ determines $y$ uniquely, if it has a solution. Hence, by \eqref{eq:conditioned-mul}, $\Pr(x\xi y\eta =e \big| \xi,\eta,x)\le Cn^{-|y|}$. If the equation holds, then $\bigl||x|-|y|\bigr|\le |\xi|+|\eta|\le 2m$, and therefore $|y|\geq\frac{|x|+|y|}{2}-m$. Summing over the at most $C'\cdot (2n)^{2m}$ choices of $\xi,\eta$ (for some $C'>0$), we obtain
    \begin{align*}\Pr\left(\exists\,\xi,\eta:\ |\xi|,|\eta|\leq m,\ x\xi y\eta =e\right) & \le
    C'\cdot (2n)^{2m} \cdot Cn^{-|y|}
    \\ & \le
    CC'(2n)^{3m}(2n)^{-\frac{1}{2}\log_{2n} (n)(|x|+|y|)}.
    \end{align*}
    We may therefore take $\beta_3=\frac{1}{2}\log_{2n}n$.

    \item Fix nontrivial words $\xi,\eta$ of length at most $m$. If $\xi x=x\eta$ has one solution $x_0$, then all solutions form the coset $x_0\text{Cent}_{F_n}(\eta)$. The centralizer of a nontrivial element of a free group is cyclic, and one may choose $|x_0|\le 2m$. If $\text{Cent}_{F_n}(\eta)=\langle h\rangle$, then any solution of length $s$ is of the form $x_0h^j$ for $|j|\le|h^j|\le|x_0|+|x_0h^j|\le 2m+s$. It follows that the number of reduced solutions $x$ of a prescribed length $s$ is at most $4m+2s+1$.

    Using \eqref{eq:conditioned-mul} and summing over $\xi,\eta$, we get
    \begin{align*}
        \Pr\left(\exists\,\xi,\eta\neq 1:\ |\xi|,|\eta|\leq m,\ \xi x=x\eta\right) & \le C(m+s)(2n)^{2m}n^{-s} \\ & \le e^{o(l)}(2n)^{3m}(2n)^{-\log_{2n}(n)s}.
    \end{align*}
    This proves the final axiom with $\beta_4=\log_{2n}n$.
\end{enumerate}
\end{proof}

We recall that a coupling of probability measures $\nu_1,\nu_2$ is a pair of random variables $(X_1, X_2)$, defined on a common probability space, with respective laws $\nu_1,\nu_2$. For every property $\mathcal{P}$,
\[
\left|\Pr(X_1\in \mathcal{P}) - \Pr(X_2\in \mathcal{P})\right| \le \Pr(X_1\ne X_2).
\]
Thus a coupling for which $\Pr(X_1\ne X_2)=o(1)$ allows one to transfer events holding with overwhelming probability from one model to the other.

\begin{thm}\label{thm:group-hyperbolic}
For any $n\ge 2$ and $0\le\delta<\frac{1}{2}$, the group $G_{n,\delta,l}$ is non-elementary hyperbolic with overwhelming probability.
\end{thm}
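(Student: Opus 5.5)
We plan to deduce the theorem from Theorem~\ref{thm:ollivier}, applied with $\Gamma=F_n$ and the truncated measures $\widetilde{\mu}_l$, and then to transfer the conclusion to $G_{n,\delta,l}$ by a coupling. First, by Proposition~\ref{prop:ollivier-axioms}, $\widetilde{\mu}_l$ satisfies Ollivier's axioms with $\beta_2$ arbitrarily large, $\beta_3=\frac12\log_{2n}n$ and $\beta_4=\log_{2n}n$, so $\min\{\beta_2,\beta_3,\beta_4\}=\frac12\log_{2n}n$. Next we check the density hypothesis. The relator set consists of $N=\lfloor n^{2\delta l}\rfloor$ independent $\widetilde{\mu}_l$-words, each of length at least $cl$ with $c=2(1-2\theta)$. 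Hence for every length $L\ge cl$ the number of relators of length $L$ is at most $N\le n^{2\delta l}\le n^{2\delta L/c}=(2n)^{dL}$, where $d=\frac{2\delta}{c}\log_{2n}n=\frac{\delta}{1-2\theta}\log_{2n}n$.

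The density condition $d<\frac12\log_{2n}n$ is equivalent to $\frac{\delta}{1-2\theta}<\frac12$, i.e.\ $\theta<\frac{1-2\delta}{4}$. We will therefore choose $\theta$ with $\delta<\theta<\frac{1-2\delta}{4}$. This interval is nonempty only when $\delta<\frac16$, which is the main obstacle. To cover all $\delta<\frac12$ we must improve either the truncation or $\beta_3$.

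To sharpen the truncation, we use Lemma~\ref{lem:cyc}. The expected number of relators with $|\cyc|=L$ is about $l\,n^{2\delta l+L/2-l}$. We apply a Markov/union bound to each length $L$ separately, rather than using one uniform count $N$. This shows that with overwhelming probability, for every $L$ the number of relators of length $L$ is at most $l^2 n^{(2\delta-1)l+L/2}$, and this is $0$ unless $L\ge 2(1-2\delta)l-o(l)$. Since $(2\delta-1)l\le(\frac{L}{2(1-2\delta)})(2\delta-1)\cdot$ only when the sign cooperates, we bound $(2\delta-1)l+L/2\le L/2$ directly. We will compare this with the full threshold $(2n)^{dL}$ with $d<\frac12\log_{2n}n$, i.e.\ $n^{d'L}$ with $d'<\frac12$. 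The count $n^{(2\delta-1)l+L/2}$ has exponent $(2\delta-1)l+L/2\le (2\delta-1)\frac{L}{2}+\frac L2=\delta L$ because $L\le 2l$. So the per-length count is at most $e^{o(l)}n^{\delta L}$ with $\delta<\frac12$, which is below the required density once we absorb $e^{o(l)}$ by taking $d$ slightly bigger than $\delta\log_{2n}n$. In the theorem statement the count bound should hold deterministically, so we condition on this good event; it depends only on lengths, and by Lemma~\ref{lem:conditional-uniform} the relators remain independent with the correct length-conditioned laws.

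With this in hand, we choose any $\theta$ with $\delta<\theta<\frac12$, truncate at length $cl$, and apply Theorem~\ref{thm:ollivier}. It gives that $F_n/\llangle R\rrangle$ is non-elementary hyperbolic with probability exponentially close to $1$. Finally we couple: we sample the semigroup relations and the $\widetilde{\mu}_l$-relators on one probability space, making them agree whenever no relation has $|\cyc(u_iv_i^{-1})|<cl$. By Corollary~\ref{cor:exists-short-rel} they disagree with probability $\ll_n l\,n^{2(\delta-\theta)l}\to0$. Two further points must be handled. Sampling unordered pairs without replacement differs from i.i.d.\ sampling with probability $O(N^2/n^{2l})\to0$. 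Orientation and cyclic reduction do not change the normal closure. Hence $G_{n,\delta,l}$ is non-elementary hyperbolic with overwhelming probability. The case $\delta=0$ (one relation) is covered by the same argument.
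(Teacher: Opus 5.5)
Your final argument is the paper's route. You truncate $\mu_l$ at length $cl$ with $c=2(1-2\theta)$ and $\delta<\theta<\frac12$, and you remove short relators by the coupling of Corollary~\ref{cor:exists-short-rel}. You bound the expected number of length-$L$ relators by $l\,n^{(2\delta-1)l+L/2}\le l\,n^{\delta L}$, using Lemma~\ref{lem:cyc} and $L\le 2l$. You then apply Theorem~\ref{thm:ollivier} at density $(\delta+\eps)\log_{2n}n<\frac12\log_{2n}n$. The crude count $N\le n^{2\delta L/c}$ in your first paragraph only reaches $\delta<\frac16$; it is a detour you rightly abandon.

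The one real difference is how you meet Ollivier's deterministic per-length cap. You condition on the length profile. The paper instead embeds the relators into a superset $\mathcal{R}$ with exactly $\lfloor n^{(\delta+\eps)L}\rfloor$ i.i.d.\ length-$L$ words, and invokes the clause that all intermediate quotients $\Gamma/\llangle R'\rrangle$ are hyperbolic. Conditioning works, but you need the failure probability to be small uniformly over admissible profiles. You can get this by picking, for each $l$, the worst of the finitely many admissible profiles and applying Theorem~\ref{thm:ollivier} to that family. Alternatively, the intermediate-quotients clause gives it directly.

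Two small fixes are needed:
\begin{enumerate}
\item Conditional independence given the lengths comes directly from i.i.d.\ sampling, not from Lemma~\ref{lem:conditional-uniform}.
\item Your Markov threshold $l^2n^{(2\delta-1)l+L/2}$ yields only $O(1/l)$ per length, hence $O(1)$ after the union over $\asymp l$ lengths. Use the threshold $n^{(\delta+\eps)L}$ instead, which gives $\ll l\,n^{-\eps L}\le l\,n^{-\eps cl}$ per length and $o(1)$ in total.
\end{enumerate}
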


\begin{proof}
Fix $0\le\delta<\frac{1}{2}$ and $n\ge 2$. To apply Theorem~\ref{thm:ollivier}, we need to make sure that the measures $\mu_l$ satisfy the four axioms of the theorem, and to control the number of relations of $G_{n,\delta,l}$ of each length. A priori, this does not hold for $G_{n,\delta,l}$: the measure $\mu_l$ gives positive weight to short words as well as long ones, and the length of each relation is random. We fix this by using two couplings: The first one replaces $\mu_l$ by $\widetilde{\mu}_l$, which controls the lengths of the relations, and the second coupling controls the number of relations of each length, allowing us to get a good density for the set of relations.

For the proof, it will be easier for us to choose the relations of $G_{n,\delta,l}$ as independent words rather than choose an independent set of relations. Since $\delta<\frac{1}{2}$, this is harmless: If $Q=\binom{|B_{n,l}|}{2}$ is the total number of unordered pairs of words and $N=\lfloor n^{2\delta l}\rfloor$ is the number of sampled relations, the probability that we choose the same relation twice when taking $N$ independent pairs is bounded by $\frac{\binom{N}{2}}{Q} = O_n(n^{(4\delta-2)l})=o_l(1)$, so this event has probability tending to $0$ as $l$ tends to $\infty$.

For the first part, we let $\widetilde{G}_{n,\delta,l}$ denote a random group with $N=\lfloor n^{2\delta l}\rfloor$ relations, such that the relations are independent and sampled by $\widetilde{\mu}_l$. We claim that there is a coupling of $G_{n,\delta,l}$ and $\widetilde{G}_{n,\delta,l}$ such that $\Pr(G_{n,\delta,l}\ne \widetilde{G}_{n,\delta,l})=o_l(1)$. Therefore, $G_{n,\delta,l}$ is non-elementary hyperbolic if and only if $\widetilde{G}_{n,\delta,l}$ is.

Indeed, we define the coupling as follows. Sample $N$ independent $\mu_l$-words $W_1,\dots,W_N$. Then, for each $1\le i\le N$, define $\widetilde{W}_i$ as follows: If $|W_i|\ge cl$, we set $\widetilde{W}_i=W_i$, and otherwise sample $\widetilde{W}_i$ from $\widetilde{\mu}_l$. By Corollary~\ref{cor:exists-short-rel},
\begin{align*}
    \Pr(G_{n,\delta,l}\ne\widetilde{G}_{n,\delta,l}) &\le \Pr(\exists i: W_i\ne \widetilde{W}_i) \\
    &\le \Pr(\exists i:|W_i|\le cl) \ll_n ln^{2(\delta-\theta)l} = o_l(1).
\end{align*}

We are left with computing the density of the relations of $\widetilde{G}_{n,\delta,l}$. For each $cl\le L\le 2l$, let $M_{\delta,L}$ denote the number of relations of $\widetilde{G}_{n,\delta,l}$ of length $L$. Then
\begin{align*}
    \mathbb{E}[M_{\delta,L}] &= N\Pr_{w\sim\widetilde{\mu}_l}(|w|=L) \\ & = N\Pr_{w\sim\mu_l}(|w|=L \,|\, |w|\ge cl) \\
    &= N\frac{\Pr_{w\sim\mu_l}(|w|=L)}{\Pr_{w\sim\mu_l}(|w|\ge cl)} \\ & \ll_n n^{2\delta l}l n^{L/2-l} \\
    &\le l n^{(2\delta-1)l+L/2} \le ln^{\delta L}
\end{align*}
where the $\ll_n$ inequality uses Lemma~\ref{lem:cyc} and $\Pr_{w\sim\mu_l}(|w|\ge cl)=1-o(1)$, and the last inequality uses $L\le 2l$. This gives the required exponential upper bound on the expected number of relators of each length. However, the number of relators at each length is random, and therefore need not satisfy Ollivier’s
deterministic length-by-length cap on every realization.

We solve this problem by considering a different set of relations $\mathcal{R}$, which will contain the relations of $\widetilde{G}_{n,\delta,l}$ with overwhelming probability. Take $\eps>0$ such that $\delta+\eps<\frac{1}{2}$. For each $cl\le L\le 2l$, we let $N_L=\lfloor n^{(\delta+\eps)L}\rfloor$. Moreover, for each $cl\le L\le 2l$, let $W_{L,1},W_{L,2},\dots$ be i.i.d.\ $\widetilde{\mu}_l$-random words, where $W_{L,j}$ is conditioned to have length $L$. We sample $\widetilde{G}_{n,\delta,l}$ by first sampling for each $1\le i\le N$ the length $L_i$ of the word $\widetilde{W}_i$, according to the distribution of the lengths induced from $\widetilde{\mu}_l$. We then take $\widetilde{W}_i$ to be the first variable from $W_{L_i,1},W_{L_i,2},\dots$ which has not been used yet. This defines $\widetilde{G}_{n,\delta,l}$ with set of relations $\widetilde{R}_{n,\delta,l}$. We then define
\[
\mathcal{R} = \{W_{L,j}\suchthat cl\le L\le 2l,1\le j\le N_L\}.
\]
We note that, by Markov's inequality, for each $cl\le L\le 2l$ we have
\[
\Pr(M_{\delta,L} \ge N_L) \ll_n ln^{-\eps L} \le ln^{-\eps cl},
\]
so
\[
\Pr(\widetilde{R}_{n,\delta,l}\nsubseteq \mathcal{R}) \le \sum_{L=\lceil cl\rceil}^{2l}\Pr(M_{\delta,L} \ge N_L) \ll_n l^2n^{-\eps cl} = o_l(1).
\]

For each $L$, the set $\mathcal{R}$ contains at most $N_L\le n^{(\delta+\eps)L}$ relations of length $L$. Therefore $\mathcal{R}$ satisfies the conditions of Theorem~\ref{thm:ollivier} with density at most $(\delta+\eps)\log_{2n}n$ and $\min\{\beta_2,\beta_3,\beta_4\}=\frac{1}{2}\log_{2n}n$. Further, with probability $1-o_l(1)$, the set of relations $\widetilde{R}_{n,\delta,l}$ is a subset of $\mathcal{R}$. Ollivier’s theorem states simultaneously that all quotients obtained from subfamilies of $\mathcal{R}$ are non-elementary hyperbolic. Hence $\widetilde{G}_{n,\delta,l}$, and therefore $G_{n,\delta,l}$, is non-elementary hyperbolic with overwhelming probability.
\end{proof}

\subsection{Non-amenability}

We now discuss the (non)-amenability of the random semigroup $M_{n,\delta,l}$ for any $0<\delta<1/2$. Theorem~\ref{thm:group-hyperbolic} shows that the enveloping group $G_{n,\delta,l}$ is non-elementary hyperbolic with overwhelming probability, and is thus non-amenable. We use this to prove that $M_{n,\delta,l}$ is neither left amenable nor right amenable with overwhelming probability.

To do so, we recall the relevant notions of semigroup amenability. Let $M$ be a semigroup, and let $\ell^{\infty}(M)$ denote the space of bounded real-valued functions on $M$. A \textbf{mean} on $\ell^{\infty}(M)$ is a positive linear functional $\phi\colon\ell^{\infty}(M)\to\mathbb{R}$ such that $\phi(1)=1$, where $1$ denotes the constant function.

For every $a\in M$, define the left translation operator $L_a$ by $(L_af)(x)=f(ax)$. We say that $M$ is \textbf{left amenable} if there is a mean $\phi$ on $\ell^{\infty}(M)$ such that $\phi(L_af)=\phi(f)$ for every $a\in M$ and $f\in\ell^{\infty}(M)$. Right amenability is defined similarly, using the right translation operators $(R_af)(x)=f(xa)$.

\begin{lem}
Let $M$ be a semigroup, let $G$ be a group, and let $\pi\colon M\to G$ be a semigroup homomorphism such that $\pi(M)$ generates $G$ as a group. Suppose that $M$ is left or right amenable. Then $G$ is amenable.
\end{lem}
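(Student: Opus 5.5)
By symmetry it suffices to treat left amenability; the right case follows by passing to opposite semigroups and groups, since inversion gives an isomorphism $G\cong G^{\mathrm{op}}$. The plan is to push a left-invariant mean on $M$ forward along $\pi$ to a mean on $G$. This mean will be invariant under left translation by elements of $S:=\pi(M)$. We then upgrade invariance from the subsemigroup $S$ to the whole group $G=\langle S\rangle$.

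\emph{Step 1: pushforward.} Let $\phi$ be a left-invariant mean on $\ell^\infty(M)$. For $f\in\ell^\infty(G)$, define $\psi(f):=\phi(f\circ\pi)$. Since $f\mapsto f\circ\pi$ is positive, linear and unital, $\psi$ is a mean on $\ell^\infty(G)$. For $a\in M$ and $x\in M$ we have $(L_{\pi(a)}f)\circ\pi(x)=f(\pi(a)\pi(x))=f(\pi(ax))=(L_a(f\circ\pi))(x)$. Hence
\[
\psi(L_{\pi(a)}f)=\phi(L_a(f\circ\pi))=\phi(f\circ\pi)=\psi(f).
\]
So $\psi$ is invariant under left translation by every $s\in S$, where $(L_gf)(x)=f(gx)$.

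\emph{Step 2: from $S$ to $G$.} The set $H:=\{g\in G:\psi\circ L_g=\psi\}$ is closed under products, because $L_{gh}=L_h L_g$ (indeed $(L_gf)(hx)=f(ghx)$). It is also closed under inverses: if $\psi\circ L_g=\psi$, then applying this to $L_{g^{-1}}f$ gives $\psi(f)=\psi(L_g L_{g^{-1}}f)$. Since $L_gL_{g^{-1}}f$ is $f(g^{-1}gx)=f(x)$ with the order of composition reversed, we need to check the identity $L_{g^{-1}}L_g=L_gL_{g^{-1}}=\mathrm{id}$. Both hold, since translations by group elements are bijections. Therefore $\psi(L_{g^{-1}}f)=\psi(L_g L_{g^{-1}}f)=\psi(f)$, so $g^{-1}\in H$. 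Thus $H$ is a subgroup of $G$ containing $S$, and hence $H=G$. This means $\psi$ is a left-invariant mean on $G$, and $G$ is amenable.

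\emph{Main obstacle.} The only subtle point is Step 2, the passage from invariance under the generating subsemigroup to invariance under inverses. For semigroups this passage fails in general, but here it is rescued by the fact that each $L_g$ is an invertible operator on $\ell^\infty(G)$ with inverse $L_{g^{-1}}$. The right-amenable case is identical, using $R_g$ and the relation $R_{gh}=R_gR_h$.
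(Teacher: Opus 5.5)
Your proof is correct and follows the paper's argument: you push the invariant mean forward via $f\mapsto\phi(f\circ\pi)$, then use $L_gL_{g^{-1}}=\mathrm{id}$ to get invariance under inverses of elements of $\pi(M)$, and hence under the group they generate. Your version is slightly more explicit than the paper's: you check closure under products via $L_{gh}=L_hL_g$, and you reduce the right-amenable case to the left one through $G\cong G^{\mathrm{op}}$, which the paper leaves as ``similar''.
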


\begin{proof}
Suppose that $M$ is left amenable, and let $\phi$ denote a mean on $\ell^{\infty}(M)$ invariant under the left translation operators of $M$. Given $f\in\ell^{\infty}(G)$, define $\widetilde{\phi}\colon\ell^{\infty}(G)\to\mathbb{R}$ by $\widetilde{\phi}(f) = \phi(f\circ\pi)$. Clearly, $\widetilde{\phi}$ is a mean on $\ell^{\infty}(G)$. It is invariant under translation by elements of $\pi(M)$ by the choice of $\phi$. Next, for every $g\in\pi(M)$ and $f\in\ell^{\infty}(G)$, we have
\[
\widetilde{\phi}(L_{g^{-1}}f) = \widetilde{\phi}(L_gL_{g^{-1}}f) = \widetilde{\phi}(f).
\]
Thus $\widetilde{\phi}$ is also invariant under translation by
inverses of elements of $\pi(M)$. Since $\pi(M)$ generates $G$, it follows that $\widetilde{\phi}$ is invariant under translation by all elements of $G$. Therefore $G$ is amenable. The proof for right amenability is similar.
\end{proof}

Combining with the non-amenability of $G_{n,\delta,l}$, we conclude:

\begin{cor}\label{cor:non-amenability}
For every $n\ge 2$ and $0<\delta<1/2$, the semigroup $M_{n,\delta,l}$ is neither left amenable nor right amenable with overwhelming probability.
\end{cor}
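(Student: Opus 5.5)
The plan is to combine Theorem~\ref{thm:group-hyperbolic} with the preceding lemma on amenability passing to groups, applied to the natural map $\iota\colon M_{n,\delta,l}\to G_{n,\delta,l}$. First, I will check that the hypotheses of the lemma hold on every realization, without any probabilistic input. By definition $G_{n,\delta,l}=G(M_{n,\delta,l})=F_n/\llangle u_iv_i^{-1}\suchthat i\in I\rrangle$, and $\iota$ sends the generator $x_j$ of $M_{n,\delta,l}$ to the image of $x_j$ in $G_{n,\delta,l}$. It is a semigroup homomorphism, since each relation $u_i=v_i$ is mapped to $u_i=v_i$, which holds in $G_{n,\delta,l}$. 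Moreover, $\iota(M_{n,\delta,l})$ contains the images of $x_1,\dots,x_n$, and these generate $G_{n,\delta,l}$ as a group because they generate $F_n$.

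Second, I will contrapose the lemma: if $M_{n,\delta,l}$ were left or right amenable, then $G_{n,\delta,l}$ would be amenable. So it suffices to show that $G_{n,\delta,l}$ is non-amenable with overwhelming probability. By Theorem~\ref{thm:group-hyperbolic}, for $0<\delta<\frac12$ the group $G_{n,\delta,l}$ is non-elementary hyperbolic with overwhelming probability. I will then invoke the standard fact that a non-elementary hyperbolic group contains a nonabelian free subgroup $F_2$ (by ping-pong on two independent loxodromic elements). Since amenability passes to subgroups and $F_2$ is not amenable, such a group is non-amenable.

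Third, I will conclude that the event ``$M_{n,\delta,l}$ is left or right amenable'' is contained in the event ``$G_{n,\delta,l}$ is not non-elementary hyperbolic.'' The probability of the latter tends to $0$ as $l\to\infty$, which gives the corollary.

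There is no real obstacle here. The only points needing care are that the homomorphism and generation hypotheses hold deterministically, so they do not interfere with the probability estimate, and that the step from non-elementary hyperbolic to non-amenable is quoted correctly.
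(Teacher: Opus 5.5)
Your proposal is correct and is essentially the paper's argument. You apply the lemma (left or right amenability of $M$ passes to a group generated by $\pi(M)$) to the canonical map $M_{n,\delta,l}\to G_{n,\delta,l}$, then combine Theorem~\ref{thm:group-hyperbolic} with the fact that non-elementary hyperbolic groups are non-amenable, just as the paper does, except that you also spell out the deterministic check of the hypotheses and the free-subgroup justification that the paper leaves implicit.
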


We also recall the strong F\o{}lner conditions. A semigroup $M$ satisfies the \textbf{left strong F\o{}lner condition} if, for every finite subset $H\subseteq M$ and every $\eps>0$, there is a finite nonempty subset $F\subseteq M$ such that $|F\setminus aF|\le\eps|F|$ for everty $a\in H$. The right strong F\o{}lner condition is defined similarly, using $Fa$ in place of $aF$.

The left (respectively, right) strong F\o{}lner condition implies left (respectively, right) amenability (see for instance \cite{GK17}). Thus, by the preceding corollary, for every $n\ge 2$ and $0<\delta<1/2$, the random semigroup $M_{n,\delta,l}$ satisfies neither the left nor the right strong F\o{}lner condition with overwhelming probability.

\section{The critical case $\delta=\frac{1}{2}$}\label{sec:critical}

We now turn to the critical density case, i.e., $\delta=\frac{1}{2}$. We recall that the Erdos--Renyi graph $\mathcal{G}_{n,1/2,l}$ associated to $M_{n,1/2,l}$ is the graph with vertex set $B_{n,l}$, where we join two vertices $u_1$ and $v_1$ if $u_1=v_1$ appears within the defining relations of $M_{n,1/2,l}$. We begin with the following facts:

\begin{fact}
$\mathcal{G}_{n,1/2,l}$ is a random graph on $|B_{n,l}|=n^l(\frac{n}{n-1}+o_l(1))$ vertices, where we place $n^l$ edges uniformly at random among all sets of $n^l$ edges on $B_{n,l}$. The density parameter of $\mathcal{G}_{n,1/2,l}$ (i.e., the ratio of the number of edges to the number of vertices) is $\frac{n-1}{n}+o_l(1)$.

If $n\ge 3$, we have $\lim_{l\to\infty}(\frac{n-1}{n}+o_l(1))>\frac{1}{2}$, so there is a giant component in $\mathcal{G}_{n,1/2,l}$: There is some constant $0<c_n<1$ such that $\mathcal{G}_{n,1/2,l}$ contains a connected component with at least $c_n|B_{n,l}|$ vertices with high probability as $l$ tends to $\infty$. In fact, all other connected components will be substantially smaller, though we do not need this.

We will denote the giant component $\mathcal{C}_{n,1/2,l}$ (and, when it does not exist, take $\mathcal{C}_{n,1/2,l}$ to be the largest connected component of $\mathcal{G}_{n,1/2,l}$). Writing $H_l=\{|\mathcal{C}_{n,1/2,l}| \ge c_n|B_{n,l}|\}$, we have $P(H_l^c)=o_l(1)$. By symmetry, $\Pr(w\in\mathcal{C}_{n,1/2,l})\ge c_n\Pr(H_l)$ for every $w\in B_{n,l}$, and in fact
\begin{equation}\label{eq:giant-comp-pr}
\Pr(w_1,\dots,w_k\in\mathcal{C}_{n,1/2,l}) \ge \frac{\binom{\lfloor c_n|B_{n,l}|\rfloor}{k}}{\binom{|B_{n,l}|}{k}}\Pr(H_l)
\end{equation}
and
\begin{equation}\label{eq:giant-comp-avoid}
\Pr(\{w_1,\dots,w_k\notin\mathcal{C}_{n,1/2,l}\}\cap H_l) \le (1-c_n)^k
\end{equation}
for every distinct $w_1,\dots,w_k\in B_{n,l}$.

We remark that when $n=2$, there is no giant component in $\mathcal{G}_{2,1/2,l}$. Instead, the largest connected component has size of order $|B_{2,l}|^{2/3}$ in probability, so the probability that even two generators lie in the same connected component of $\mathcal{G}_{2,1/2,l}$ tends to $0$. However, it is reasonable that one can use the congruence properties to increase $\mathcal{G}_{2,1/2,l}$ and get a giant component with positive probability.
\end{fact}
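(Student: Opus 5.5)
The plan has two parts: first establish the giant component from classical random graph theory, then derive the two inequalities \eqref{eq:giant-comp-pr} and \eqref{eq:giant-comp-avoid} purely from the permutation symmetry of the model.

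\emph{Step 1: the edge density.} Write $K=|B_{n,l}|$ and note that $\mathcal{G}_{n,1/2,l}$ is the uniform random graph $\mathcal{G}(K,N)$ with $N=\lfloor n^{l}\rfloor=n^l$ edges. Then
\[
N/K=\frac{n^l}{n^l(\frac{n}{n-1}+o_l(1))}=\frac{n-1}{n}+o_l(1).
\]
The mean degree is therefore $\lambda_l=2N/K\to\lambda:=2(n-1)/n$. We have $\lambda>1$ exactly when $n\ge 3$ (for instance $\lambda=4/3$ when $n=3$), while $\lambda=1$ when $n=2$, which is the critical window.

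\emph{Step 2: the giant component.} I would invoke the classical Erd\H{o}s--R\'enyi theorem for $\mathcal{G}(K,N)$ with $2N/K\to\lambda>1$. It states that the largest component has size $(\zeta_\lambda+o_P(1))K$, where $\zeta_\lambda\in(0,1)$ is the positive root of $1-\zeta=e^{-\lambda\zeta}$. It also states that the second largest component has size $O_P(\log K)$. If needed, this transfers from the binomial model $\mathcal{G}(K,p)$ to the uniform model by the standard monotonicity/conditioning argument. Choosing any $0<c_n<\zeta_\lambda$ gives $\Pr(H_l^c)=o_l(1)$. For $n=2$, the critical result (largest component of order $K^{2/3}$) is likewise quoted from the literature; it is only a remark and is not needed later.

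\emph{Step 3: the symmetry bounds.} The law of $\mathcal{G}_{n,1/2,l}$ is invariant under all permutations of the vertex set $B_{n,l}$. To make $\mathcal{C}_{n,1/2,l}$ an equivariant function of the graph, I would break ties among largest components uniformly at random, using independent randomness. On $H_l$ this is immaterial for large $l$, since the giant component is unique with high probability. By exchangeability, conditionally on $|\mathcal{C}_{n,1/2,l}|=s$, the set $\mathcal{C}_{n,1/2,l}$ is a uniformly random $s$-subset of $B_{n,l}$. Hence, for distinct $w_1,\dots,w_k$,
\[
\Pr(w_1,\dots,w_k\in\mathcal{C}\mid |\mathcal{C}|=s)=\binom{s}{k}\Big/\binom{K}{k}.
\]
This is increasing in $s$, so for $s\ge c_nK$ it is at least $\binom{\lfloor c_nK\rfloor}{k}/\binom{K}{k}$. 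Averaging over $s$ restricted to $H_l$ gives \eqref{eq:giant-comp-pr}; the case $k=1$ gives $\Pr(w\in\mathcal{C})\ge c_n\Pr(H_l)$. Similarly,
\[
\Pr(w_1,\dots,w_k\notin\mathcal{C}\mid |\mathcal{C}|=s)=\binom{K-s}{k}\Big/\binom{K}{k}\le\Big(\frac{K-s}{K}\Big)^k\le(1-c_n)^k
\]
for $s\ge c_nK$, and averaging over $H_l$ gives \eqref{eq:giant-comp-avoid}.

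The main obstacle is essentially bookkeeping rather than mathematics. One must quote the giant-component theorem in the uniform-edge model with a slowly varying parameter $\lambda_l\to\lambda$, which is handled by sandwiching between $\lambda\pm\eta$. One must also make the choice of $\mathcal{C}$ permutation-equivariant, so that the exchangeability step is rigorous.
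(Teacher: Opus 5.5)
Your proposal is correct and matches the paper's reasoning: the paper quotes the classical Erd\H{o}s--R\'enyi giant-component theorem for mean degree $2(n-1)/n>1$, and obtains \eqref{eq:giant-comp-pr} and \eqref{eq:giant-comp-avoid} ``by symmetry.'' Concretely, that means conditioning on $|\mathcal{C}_{n,1/2,l}|=s$, so that the component is a uniform $s$-subset of $B_{n,l}$, exactly as you do (the paper uses this again in the proof of Theorem~\ref{thm:critical-tight}). Your equivariant random tie-breaking makes that symmetry step rigorous where the paper leaves it implicit, and one small fix: for $k=1$, use $s/|B_{n,l}|\ge c_n$ directly rather than the floored binomial ratio, so that you get exactly $c_n\Pr(H_l)$.
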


Using this fact, we prove the following:

\begin{thm}\label{thm:critical-tight}
For every $n\ge 3$, the semigroup $M_{n,1/2,l}$ is nilpotent with overwhelming probability, and its size is asymptotically tight:
\[
\lim_{K\to\infty}\limsup_{l\to\infty} \Pr\left(|M_{n,1/2,l}|>K\right) = 0.
\]
\end{thm}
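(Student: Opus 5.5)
The plan is to show that, with overwhelming probability, the common value of the giant component $\mathcal{C}=\mathcal{C}_{n,1/2,l}$ is a zero of $M=M_{n,1/2,l}$. Every word that contains a subword in $\mathcal{C}$ is then zero, and both nilpotency and tightness reduce to counting words none of whose subwords lie in $\mathcal{C}$. All words in $\mathcal{C}$ have the same image $e\in M$, because $\mathcal{C}$ is connected by defining relations.

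\textbf{Step 1 ($e$ is a zero).} Call a word $z$ of length at most $l-1$ \emph{good} if the $2n+1$ distinct words $z$, $zx_i$ and $x_iz$ ($1\le i\le n$) all lie in $\mathcal{C}$. For a good $z$ we get $ex_i=e=x_ie$ for every $i$, so $e$ is a zero of $M$.
\begin{itemize}
\item By \eqref{eq:giant-comp-pr}, a fixed $z$ is good with probability at least roughly $c_n^{2n+1}$.
\item To get overwhelming probability, choose a family of $T\to\infty$ words $z_1,\dots,z_T$ (say $T\asymp l$) of length about $l/2$ whose associated $(2n+1)$-sets are pairwise disjoint.
\item Bound the probability that none of them is good by an exponential-moment or second-moment estimate on the number of good $z_j$. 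This should follow from the symmetry argument behind \eqref{eq:giant-comp-pr}--\eqref{eq:giant-comp-avoid}: conditionally on $H_l$, the vertex set of $\mathcal{C}$ is close to a uniform random subset of density at least $c_n$.
\end{itemize}
Once $e$ is a zero, Fact~\ref{fact} gives $w=asb=aeb=e$ whenever $w=asb$ with $s\in\mathcal{C}$. So every word with a subword in $\mathcal{C}$ is zero.

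\textbf{Step 2 (nilpotency).} Call a word $w$ \emph{bad} if it has no subword in $\mathcal{C}$. It suffices to show that, with overwhelming probability, no word of length exactly $l$ is bad. Then every word of length at least $l$ is zero, so $M^{l}=\{e\}$ is nilpotent, and $M$ is finite since it is covered by the images of $B_{n,l}$.
\begin{itemize}
\item A word $w$ of length $l$ has some number $D(w)$ of distinct subwords, all of them vertices of $\mathcal{G}_{n,1/2,l}$. By \eqref{eq:giant-comp-avoid}, $\Pr(w\ \text{bad}\cap H_l)\le(1-c_n)^{D(w)}$.
\item Every word has $D(w)\ge l$, since there is at least one subword of each length. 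A typical word has $D(w)\gtrsim l^2$.
\item I will bound the number of words with $D(w)\le \lambda l^2$. Such words must be highly repetitive, for instance having a long subword that occurs many times, or a small period. This count should be at most $n^{\kappa l}$ with $\kappa<1$ small.
\item A union bound then gives $n^l(1-c_n)^{\lambda l^2}+\sum_{D\le \lambda l^2} \#\{w: D(w)=D\}(1-c_n)^{D}\to 0$.
\end{itemize}
This combinatorial count of low-complexity words, weighed against $(1-c_n)^{D}$, is the step I expect to be the main obstacle. A cruder alternative, if it suffices, is to use only the $\asymp l^2$ subwords of length between $l/3$ and $2l/3$ starting at positions $\le l/3$. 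These are pairwise distinct unless $w$ has a period at most $l/3$ on a long stretch, and there are only $n^{O(l/3)}$ such exceptional words.

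\textbf{Step 3 (tightness).} The nonzero elements of $M$ are images of bad words. There are at most $|B_{n,K}|$ images of bad words of length at most $K$. A bad word of length $k>K$ has at least $k$ distinct subwords, and generically $\gtrsim k^2$.
\begin{itemize}
\item The same count as in Step~2 applied at length $k$ gives $\mathbb{E}\bigl[\#\{\text{bad words of length }k\}\mathbf 1_{H_l}\bigr]\le n^{k}(1-c_n)^{\lambda k^2}+(\text{low-complexity term})$. This is summable in $k$, uniformly in $l$.
\item Hence $\Pr\bigl(|M|>|B_{n,K}|+1\bigr)\le \Pr(H_l^c)+o_l(1)+\sum_{k>K}(\cdots)$.
\item Taking $\limsup_{l}$ and then $K\to\infty$ yields the stated tightness.
\end{itemize}
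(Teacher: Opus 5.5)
Your plan is correct, and it follows the same architecture as the paper's proof. First, you make the class $e$ of $\mathcal{C}_{n,1/2,l}$ an absorbing zero, using that conditionally on $|\mathcal{C}_{n,1/2,l}|=s$ its vertex set is a uniform $s$-subset; disjoint test sets then have non-positive covariance and Chebyshev applies. Second, you union-bound over all words of one length via \eqref{eq:giant-comp-avoid}, splitting by subword complexity. Third, you fix that length to get tightness. In Step~1 the paper uses, for each $i$ separately, the $n^{l-1}$ disjoint pairs $\{u,x_iu\}$ with $|u|=l-1$ (and the mirror pairs) instead of your $(2n+1)$-tuples; both work.

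The real difference is the low-complexity count, which you leave open.

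\textbf{The paper's version.} It splits according to whether $w$ has at least $r^{1-\eps}$ distinct subwords of length $\lfloor r/2\rfloor$, and uses only $\lfloor r/4\rfloor$ prefixes on the sparse class. It then imports the subexponential bound $|S_2|\le\exp(O_n(r^{1-\eps}))$ from \cite[Theorem 4.2]{Kh26}.

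\textbf{Your version.} Your split by $D(w)\le\lambda l^2$ needs only an exponential count. On that class, however, you only know $D(w)\ge l$, and $c_n$ is an arbitrary constant. So ``$\kappa<1$ small'' must mean $n^{\kappa}(1-c_n)<1$, i.e.\ $\kappa(\lambda)\to0$ as $\lambda\to0$. This is true and elementary, so your route avoids the citation.

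\textbf{An elementary proof of your count.} Put $h=\lceil\sqrt{2\lambda}\,l\rceil$. If, for every $j\in[l-h,l]$, the $l-j+1$ subwords of length $j$ were pairwise distinct, then $D(w)\ge(h+1)(h+2)/2>\lambda l^2$. Hence for some such $j$ there are $a<b$ with $w[a,a+j)=w[b,b+j)$. Then $w[a,b+j)$ has period $b-a$, and $w$ is determined by $(a,b,j)$ together with $l-j\le h$ letters. This gives at most $l^3n^{h}$ such words. Consequently
$n^l(1-c_n)^{\lambda l^2}+l^3n^{\sqrt{2\lambda}\,l+1}(1-c_n)^l\to0$
as soon as $n^{\sqrt{2\lambda}}(1-c_n)<1$. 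The same estimate at a fixed length $r$ gives tightness.

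\textbf{Your ``cruder alternative'' does not work as stated.} A coincidence between two length-$j$ subwords with $j\ge l/3$ determines only $j$ of the $l$ letters. The exceptional set can therefore have size about $n^{2l/3}$, and $(1-c_n)^l$ does not beat that in general.

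\textbf{Step~3.} You do not need to sum over all $k>K$. For $k>l$ your bound is not even available, since subwords longer than $l$ are not vertices of the graph. Every bad word of length $>K$ has a bad prefix of length $K+1$, so the single length $r=K+1$ suffices, exactly as in the paper.
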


\begin{proof}
We first show that, with overwhelming probability, the element $z$ of $M_{n,1/2,l}$ corresponding to the giant component $\mathcal{C}_{n,1/2,l}$ is an absorbing zero. For each $1\le i\le n$, let $A_i$ denote the event that there exists a word $u$ of length $l-1$ such that $u,x_iu\in\mathcal{C}_{n,1/2,l}$. Let $X_i$ count such words~$u$. Conditional on $|\mathcal{C}_{n,1/2,l}|=s$, the vertex set of $\mathcal{C}_{n,1/2,l}$ is uniformly distributed among all subsets of $B_{n,l}$ of size $s$, and hence
\[
\mathbb{E}\left[X_i\mid |\mathcal{C}_{n,1/2,l}|=s\right] = n^{l-1}\frac{s(s-1)}{|B_{n,l}|(|B_{n,l}|-1)}.
\]
The pairs $\{u,x_iu\}$ are vertex-disjoint, so the indicators of $u\in\mathcal{C}_{n,1/2,l}$ and $x_iu\in\mathcal{C}_{n,1/2,l}$ have non-positive covariance under this conditioning. Therefore
\[
\operatorname{Var}\left(X_i\mid |\mathcal{C}_{n,1/2,l}|=s\right) \le \mathbb{E}\left[X_i\mid |\mathcal{C}_{n,1/2,l}|=s\right].
\]
For $s\ge c_n|B_{n,l}|$ and sufficiently large $l$, the conditional expectation is at least $\frac{c_n^2}{2}n^{l-1}$. By Chebyshev's inequality,
\[
\Pr(A_i^c\cap H_l)\le \frac{2}{c_n^2n^{l-1}}.
\]
Consequently,
\[
\Pr(A_1^c\cup\cdots\cup A_n^c) \le \Pr(H_l^c)+\frac{2n}{c_n^2n^{l-1}} \longrightarrow 0.
\]
We therefore have $z=x_iz$ for every $1\le i\le n$ with overwhelming probability. A similar reasoning shows that $z=zx_i$ for every $1\le i\le n$ with overwhelming probability, so $z$ is an absorbing zero in $M_{n,1/2,l}$. We denote this event by $Z_l$.

We next estimate the probability that every word of length $r$ contains a subword which lies in $\mathcal{C}_{n,1/2,l}$. Fix $0<\eps<1$, and let $r\le l$ be sufficiently large. For each word $w$ of length $r$, let $B_w$ denote the event that $w$ has a subword of length between $\frac{r}{2}$ and $\frac{3r}{4}$ which lies in $\mathcal{C}_{n,1/2,l}$.

Let $S_1$ denote the set of words of length $r$ that have at least $r^{1-\eps}$ distinct subwords of length $\lfloor\frac{r}{2}\rfloor$, and $S_2$ the words of length $r$ with fewer than $r^{1-\eps}$ such distinct subwords. If $w\in S_1$, then $w$ has at least $\frac{1}{2}r^{1-\eps}$ distinct subwords of length $L$ for any integer $\frac{r}{2}\le L\le\frac{3r}{4}$, since any distinct subword of length $\lfloor\frac{r}{2}\rfloor$ appears either as a prefix or as a suffix of such a word. Therefore, for sufficiently large $r$, the word $w$ has at least $\frac{1}{16}r^{2-\eps}$ distinct subwords of lengths between $\frac{r}{2}$ and $\frac{3r}{4}$.

By \eqref{eq:giant-comp-avoid}, for any $w\in S_1$,
\[
\Pr(B_w^c\cap H_l)\le (1-c_n)^{r^{2-\eps}/16}.
\]
If $w\in S_2$, then it has at least $\lfloor\frac{r}{4}\rfloor$ prefixes of lengths between $\frac{r}{2}$ and $\frac{3r}{4}$, so
\[
\Pr(B_w^c\cap H_l)\le (1-c_n)^{\lfloor r/4\rfloor}.
\]
However, by \cite[Theorem 4.2]{Kh26}, $|S_2|\le \exp(O_n(r^{1-\eps}))$. Together we get
\begin{align*}
  \Pr\left(\bigcup_{w:|w|=r}B_w^c\right) &\le \Pr(H_l^c) + |S_1|(1-c_n)^{r^{2-\eps}/16} +|S_2|(1-c_n)^{\lfloor r/4\rfloor}\\
  &\le \Pr(H_l^c) + n^r(1-c_n)^{r^{2-\eps}/16} + \exp(O_n(r^{1-\eps}))(1-c_n)^{\lfloor r/4\rfloor}.
\end{align*}
Taking $r=l$ in the preceding estimate and using $\Pr(Z_l)\to 1$, we see that, with overwhelming probability, every word of length $l$ equals $z$.

Combining this with $\Pr(Z_l)\to 1$, we see that, with overwhelming probability, every word of length $l$ equals $z$. Since $z$ is absorbing, every longer word also equals $z$, so $M_{n,1/2,l}$ is nilpotent with overwhelming probability.

More generally, on the event $Z_l\cap\bigcap_{w:|w|=r}B_w$, every word of length at least $r$ equals $z$. Therefore $|M_{n,1/2,l}|\le 1+n+\cdots+n^{r-1}$. Consequently, for every sufficiently large fixed $r$,
\begin{multline*}
    \limsup_{l\to\infty} \Pr\left(|M_{n,1/2,l}|>1+n+\cdots+n^{r-1}\right) \\ \le n^r(1-c_n)^{r^{2-\eps}/16} + \exp(O_n(r^{1-\eps}))(1-c_n)^{\lfloor r/4\rfloor}.
\end{multline*}
The right-hand side tends to $0$ as $r\to\infty$, proving that the size of $M_{n,1/2,l}$ is asymptotically tight.
\end{proof}

\begin{cor}
With overwhelming probability, the semigroup $M_{n,1/2,l}$ is finite.
\end{cor}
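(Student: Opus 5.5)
The corollary follows from the proof of Theorem~\ref{thm:critical-tight} rather than from the tightness statement alone. Tightness only says that for each $\epsilon>0$ there is a $K$ with $\limsup_l\Pr(|M_{n,1/2,l}|>K)<\epsilon$. That gives $\Pr(|M_{n,1/2,l}|<\infty)\ge 1-\epsilon$ eventually, and since $\epsilon$ is arbitrary, $\Pr(M_{n,1/2,l}\text{ finite})\to 1$. This is already the claim. A more direct argument uses nilpotency, and I would present that one.

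I will work throughout with $n\ge 3$, as in the theorem. In the proof of Theorem~\ref{thm:critical-tight}, on the event $Z_l\cap\bigcap_{|w|=l}B_w$ (with $r=l$), the element $z$ is an absorbing zero and every word of length $l$ equals $z$. Consequently every word of length at least $l$ equals $z$. Every element of $M_{n,1/2,l}$ is then either $z$ or the image of a word of length less than $l$, so
\[
|M_{n,1/2,l}|\le 1+n+\cdots+n^{l-1}<\infty .
\]
The proof of the theorem shows that this event has probability tending to $1$. That probability estimate is the only nontrivial input, and it has already been established, so the corollary follows.

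The only point needing care is to check that the bound in the theorem's proof is legitimate at $r=l$, since $B_w$ concerns subwords of lengths between $l/2$ and $3l/4$, and these lie in $B_{n,l}$. Granting that, the error terms $n^l(1-c_n)^{l^{2-\eps}/16}$ and $\exp(O_n(l^{1-\eps}))(1-c_n)^{\lfloor l/4\rfloor}$ tend to $0$ as $l\to\infty$, and $\Pr(H_l^c)\to 0$. So no step here is a real obstacle.
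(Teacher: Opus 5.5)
Your proposal is correct and matches the paper, which states this corollary without separate proof as an immediate consequence of Theorem~\ref{thm:critical-tight}: that proof shows that with overwhelming probability every word of length $l$ equals the absorbing zero $z$, so $M_{n,1/2,l}$ is nilpotent and, being finitely generated, finite (indeed $|M_{n,1/2,l}|\le 1+n+\cdots+n^{l-1}$). One small point is that your opening claim that tightness alone does not suffice is contradicted by your own (correct) next argument, since tightness suffices and so does the nilpotency stated in the theorem, so there is no need to reopen its proof.
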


\begin{cor}\label{cor:critical-subsequential-lim}
Consider the sequence of distributions of $M_{n,1/2,l}$ on the set of finitely presented semigroups. Then this sequence has a subsequence of these measures that converges pointwise to a probability measure on finite nilpotent semigroups.
\end{cor}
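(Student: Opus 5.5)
The plan is to deduce Corollary~\ref{cor:critical-subsequential-lim} from Theorem~\ref{thm:critical-tight} by a Prokhorov-type diagonal argument on a countable set. Let $\mathcal{S}$ be the set of isomorphism classes of finite $n$-generated semigroups. More precisely, we take finite semigroups together with a marked generating $n$-tuple, i.e.\ finite quotients of $\F_n$; either choice works. The set $\mathcal{S}$ is countable, since for each size $K$ there are only finitely many semigroup structures on a $K$-element set with $n$ marked generators. Each such finite semigroup is finitely presented, so $\mathcal{S}$ sits inside the space of finitely presented semigroups. Write $\mu_l$ for the law of $M_{n,1/2,l}$ and define $\mu_l(\{S\})=\Pr(M_{n,1/2,l}\cong S)$ for $S\in\mathcal{S}$.

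First, by the diagonal argument, enumerate $\mathcal{S}=\{S_1,S_2,\dots\}$. Each sequence $(\mu_l(\{S_j\}))_l$ lies in $[0,1]$, so we can successively extract subsequences along which $\mu_l(\{S_1\})$, then $\mu_l(\{S_2\})$, and so on, converge. The diagonal subsequence $l_k$ then gives pointwise limits $\mu(\{S_j\})=\lim_k\mu_{l_k}(\{S_j\})$ for every $j$. Pointwise convergence on the remaining finitely presented (infinite) semigroups follows from the next step.

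Second, we show that $\mu$ has total mass $1$, which is where tightness enters. By Fatou, $\sum_j\mu(\{S_j\})\le 1$. For the reverse inequality, fix $\eta>0$. Theorem~\ref{thm:critical-tight} gives $K$ with $\limsup_l\Pr(|M_{n,1/2,l}|>K)<\eta$, and the corollary preceding the statement gives that $M_{n,1/2,l}$ is finite with overwhelming probability. Only finitely many $S\in\mathcal{S}$ have $|S|\le K$, and for this finite family limits commute with sums. Hence
\[
\sum_{|S_j|\le K}\mu(\{S_j\})=\lim_k\Pr(M_{n,1/2,l_k}\text{ finite},\ |M_{n,1/2,l_k}|\le K)\ge 1-\eta .
\]
Letting $\eta\to0$ shows that $\mu$ is a probability measure supported on $\mathcal{S}$. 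Every infinite finitely presented semigroup has $\mu_l$-mass tending to $0$, so pointwise convergence holds there as well, with limit $0$.

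Third, we restrict the support to nilpotent semigroups. Theorem~\ref{thm:critical-tight} states that $\Pr(M_{n,1/2,l}\text{ nilpotent})\to1$. For each non-nilpotent $S\in\mathcal{S}$ we then have $\mu_l(\{S\})\le\Pr(M_{n,1/2,l}\text{ not nilpotent})\to0$, so $\mu(\{S\})=0$. Hence $\mu$ is concentrated on finite nilpotent semigroups.

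There is no real obstacle. The only point requiring care is the mass-preservation step. Pointwise limits on a countable space can lose mass, and asymptotic tightness from Theorem~\ref{thm:critical-tight} is exactly what rules this out. We must also check that ``size at most $K$'' determines a finite set of isomorphism classes, which holds because $n$ is fixed.
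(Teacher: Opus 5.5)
Your argument is correct and matches what the paper intends. The paper gives no separate proof: the corollary is stated as an immediate consequence of Theorem~\ref{thm:critical-tight}, via Helly/Prokhorov selection on the countable discrete set of isomorphism classes, with asymptotic tightness preventing loss of mass and nilpotency with overwhelming probability confining the limit to finite nilpotent semigroups. You have written out the diagonal extraction and the mass-preservation step explicitly. Note only that, like Theorem~\ref{thm:critical-tight}, this requires $n\ge 3$.
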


The probability of obtaining the trivial semigroup remains bounded away from zero, as follows directly from the above fact:

\begin{cor}\label{cor:critical-trivial}
For every $n\ge 3$,
\[
\liminf_{l\to\infty}\Pr(M_{n,1/2,l}\text{ is trivial}) > 0.
\]
\end{cor}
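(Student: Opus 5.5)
Work on the event $Z_l$ from the proof of Theorem~\ref{thm:critical-tight}: the class $z$ of the giant component $\mathcal{C}_{n,1/2,l}$ is an absorbing zero, and $\Pr(Z_l)\to 1$. On $Z_l$, the semigroup $M_{n,1/2,l}$ is trivial exactly when every generator $x_1,\dots,x_n$ equals $z$. Indeed, every element is then a product of copies of $z$, and $z^2=z$ because $z$ is absorbing. So it suffices to show
\[
\liminf_{l\to\infty}\Pr(x_1,\dots,x_n\in\mathcal{C}_{n,1/2,l}) > 0,
\]
since then $\Pr(M_{n,1/2,l}\text{ trivial})\ge \Pr(x_1,\dots,x_n\in\mathcal{C}_{n,1/2,l})-\Pr(Z_l^c)$.

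For this lower bound, apply \eqref{eq:giant-comp-pr} with $k=n$ and $w_i=x_i$, which are distinct words of $B_{n,l}$. This gives
\[
\Pr(x_1,\dots,x_n\in\mathcal{C}_{n,1/2,l}) \ge \frac{\binom{\lfloor c_n|B_{n,l}|\rfloor}{n}}{\binom{|B_{n,l}|}{n}}\Pr(H_l).
\]
As $l\to\infty$ we have $|B_{n,l}|\to\infty$, so the ratio of binomials tends to $c_n^n>0$, and $\Pr(H_l)\to 1$. Hence the $\liminf$ is at least $c_n^n$, and therefore $\liminf_{l}\Pr(M_{n,1/2,l}\text{ trivial})\ge c_n^n>0$.

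The only point needing care is the justification of \eqref{eq:giant-comp-pr}, which the paper states as part of the Fact. It rests on exchangeability: the law of the random graph is invariant under permutations of $B_{n,l}$. Hence, conditional on the giant component having size $s$, its vertex set is a uniform $s$-subset, and any fixed $k$ vertices all lie in it with probability $\binom{s}{k}/\binom{|B_{n,l}|}{k}$. This quantity is monotone in $s$, so it is bounded below on $H_l$ by its value at $s=\lfloor c_n|B_{n,l}|\rfloor$. I take this as given from the Fact, so no step should be a genuine obstacle. The argument is the short reduction to ``all generators lie in the giant component'', which on $Z_l$ forces triviality.
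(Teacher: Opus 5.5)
Your proof is correct and matches the paper's in its core: apply \eqref{eq:giant-comp-pr} to put the generators into the giant component $\mathcal{C}_{n,1/2,l}$, with probability bounded below by a power of $c_n$. The one difference is how you get idempotence. You import the absorbing-zero event $Z_l$ from the proof of Theorem~\ref{thm:critical-tight}, while the paper also places the word $x_1^2$ in $\mathcal{C}_{n,1/2,l}$ (taking $k=n+1$), so that every generator equals $x_1$ and $x_1^2=x_1$; this makes the corollary follow from the Fact alone, at the harmless cost of the bound $c_n^{n+1}$ instead of your $c_n^n$.
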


\begin{proof}
By \eqref{eq:giant-comp-pr},
\[
\liminf_{l\to\infty}\Pr(x_1,\dots,x_n,x_1^2\in\mathcal{C}_{n,1/2,l}) \ge c_n^{n+1}.
\]
This proves the corollary since, if $x_1,\dots,x_n,x_1^2$ all lie in the same connected component of $\mathcal{G}_{n,1/2,l}$, then they are all equal in $M_{n,1/2,l}$ and so $M_{n,1/2,l}$ is trivial.
\end{proof}

On the other hand, arbitrarily large finite quotients occur with probabilities bounded away from zero:

\begin{prop}\label{prop:critical-unbounded}
For every $N\ge 1$,
\[
\liminf_{l\to\infty}\Pr(M_{n,1/2,l}\text{ has a homomorphic image of size }N) > 0.
\]
\end{prop}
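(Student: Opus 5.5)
We follow the idea of Proposition~\ref{prop:finite-hom-images}. Fix $N\ge 1$; the case $N=1$ is trivial, since every semigroup maps onto the trivial semigroup. For $N\ge2$ choose $t$ with $n+\dots+n^{t-1}<N\le n+\dots+n^t$. Suppose that no relation word of $M_{n,1/2,l}$ has length at most $t$. Then every defining relation $u=v$ holds in the semigroup $\widetilde M$ of \eqref{eq:3}, because both sides have length $\ge t+1$ and so equal $0$ there. Hence $M_{n,1/2,l}$ maps onto $\widetilde M$. Exactly as in the proof of Proposition~\ref{prop:finite-hom-images}, $\widetilde M$ maps onto a semigroup of size exactly $N$, obtained by killing $D=n+\dots+n^t-N$ words of length $t$. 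So it suffices to show
\[
\liminf_{l\to\infty}\Pr\bigl(\text{no relation word of } M_{n,1/2,l} \text{ lies in } B_{n,t}\bigr)>0 .
\]

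This event says that the $T:=|B_{n,t}|$ fixed vertices of $B_{n,t}$ are all isolated in the random graph $\mathcal{G}_{n,1/2,l}$. That graph has $k=|B_{n,l}|$ vertices and $E=\lfloor n^{l}\rfloor$ uniformly chosen edges. We compute this probability exactly as in part (1) of Proposition~\ref{finite quotients prop}. Let $q=\binom{k-T}{2}$ and $p=\binom{k}{2}$. Then
\[
\Pr=\frac{\binom{q}{E}}{\binom{p}{E}}\ge\Bigl(1-\frac{p-q}{p-E}\Bigr)^{E}.
\]
Here $p-q\le Tk$ and $p-E\ge k^2/3$ for large $l$. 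Since $E\le k$, the probability is at least $(1-3T/k)^{k}\to e^{-3T}$. The limit inferior is therefore at least $e^{-3T}>0$, where $T$ depends only on $n$ and $N$.

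There is no real obstacle. The only point needing care is that at $\delta=1/2$ the bound of Proposition~\ref{finite quotients prop}(1) no longer tends to $1$. One must check that the number of edges is comparable to the number of vertices, so the isolation probability stays bounded below by a positive constant rather than tending to $0$. An alternative is a second-moment or Poisson approximation for the number of edges meeting $B_{n,t}$. This number has bounded mean, about $2TE/k\approx 2T(n-1)/n$, and equals $0$ with limiting probability $e^{-2T(n-1)/n}$. The crude product bound above already suffices. Note also that the argument works for every $n\ge2$, not only $n\ge3$.
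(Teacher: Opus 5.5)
Your argument is the paper's argument. You reduce, via the construction of Proposition~\ref{prop:finite-hom-images}, to the event that all relation words are long. You then bound that event from below by $\binom{q}{E}/\binom{p}{E}\ge\bigl(1-\frac{p-q}{p-E}\bigr)^{E}$, as in Proposition~\ref{finite quotients prop}(1). Your estimates $p-q\le Tk$, $p-E\ge k^2/3$ and $E\le k$ are correct, and so is the resulting bound $e^{-3T}$; the paper gets $\exp(-\frac{8}{d}n^{t-1})$ from the same computation. You are also right that nothing here uses $n\ge 3$.

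The step that fails as written is the size count. The slip is already present in the proof of Proposition~\ref{prop:finite-hom-images}, which you invoked. In $\widetilde M$, the $n+\cdots+n^t$ words of length at most $t$ are distinct and nonzero, and there is also the zero, so $|\widetilde M|=\sum_{j=0}^{t}n^j$. Killing $D=n+\cdots+n^t-N$ words of length $t$ therefore leaves $N$ nonzero elements plus $0$, a semigroup of size $N+1$, not $N$.

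The fix is a shift of indices. For $N\ge 2$, take $t\ge 1$ with $\sum_{j=0}^{t-1}n^j<N\le\sum_{j=0}^{t}n^j$. Then kill $D=\sum_{j=0}^{t}n^j-N<n^t$ words of length $t$, which gives exactly $N$ elements. This is the indexing the paper uses in its proof of this statement: $1+n+\cdots+n^{t-2}<N\le 1+n+\cdots+n^{t-1}$, with relation words of length at least $t$. With this correction your proof is complete.
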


\begin{proof}
Fix $N\ge 2$, and let $t\ge 2$ be such that $1+n+\cdots+n^{t-2} < N \le 1+n+\cdots+n^{t-1}$. We claim that
\begin{equation}\label{eq:len-rels-critical}
\liminf_{l\to\infty}\Pr(\text{all relation words of }M_{n,1/2,l}\text{ are of length at least }t) > 0.
\end{equation}
Once \eqref{eq:len-rels-critical} is established, the proposition will follow as in the proof of Proposition~\ref{prop:finite-hom-images}.

To prove \eqref{eq:len-rels-critical}, we follow the proof of Proposition~\ref{finite quotients prop}. The number of unordered pairs of elements from $B_{n,l}$ is
\[
p=p_{n,l}=\binom{n+\cdots+n^l}{2},
\]
whereas the number of unordered pairs of words in $B_{n,l}$ of length at least $t$ is
\[
q=q_{n,t,l}=\binom{n^t+\cdots+n^l}{2}.
\]
If $A_l$ is the event that all words in $\mathcal{R}_{n,1/2,l}$ have length at least $t$, then
\[
\Pr(A_l)  = \frac{\binom{q}{n^l}}{\binom{p}{n^l}} \ge \left(1-\frac{p-q}{p-n^l}\right)^{n^l}.
\]
We have
\[
p-q \le \frac{2n^{1+l+t}}{(n-1)^2} \le 8n^{l+t-1}
\]
for all $n\ge 2$, and $p-n^l\ge d n^{2l}$ for some constant $d>0$, hence
\[
\Pr(A_l)  \ge \left(1 - \frac{8n^{l+t-1}}{dn^{2l}}\right)^{n^l} = \left(1 - \frac{8n^{t-1}}{dn^{l}}\right)^{n^l} \xrightarrow[l\to\infty]{}\exp\left(-\frac{8}{d}n^{t-1}\right).
\]
This proves \eqref{eq:len-rels-critical}, and the proposition follows.
\end{proof}

\begin{ques}
Corollary~\ref{cor:critical-subsequential-lim} shows that, after passing to a subsequence, the measures of $M_{n,1/2,l}$ converge to a probability measure on finite nilpotent semigroups. Corollary~\ref{cor:critical-trivial} shows that this limiting measure gives positive mass to the trivial semigroup, while Proposition~\ref{prop:critical-unbounded} shows that it gives positive mass to semigroups of arbitrarily large sizes. It is therefore natural to ask:
\begin{enumerate}
    \item Do the measures of $M_{n,1/2,l}$ converge pointwise (or in any other sense), without moving to a subsequence?
    \item Which finite semigroups occur in the support of the limiting measure?
\end{enumerate}
\end{ques}

\end{document}